\newtheorem{theorem}{Theorem}
\newtheorem{lemma}{Lemma}
\def\qed{ \rule{.08in}{.08in}}
\def\qed{ \rule{.08in}{.08in}}
\DeclareMathAlphabet{\mathcal}{OMS}{cmsy}{m}{n}
\title{\LARGE \bf On a Modified DeGroot-Friedkin Model of Opinion Dynamics
}
\author{Zhi Xu, Ji Liu, and Tamer Ba\c{s}ar
\thanks{Zhi Xu, Ji Liu, and Tamer Ba\c{s}ar are with the Coordinated Science Laboratory, Department of Electrical and Computer Engineering, University of Illinois at Urbana-Champaign, USA
        ({\tt\small \{zhixu2,jiliu,basar1\}@illinois.edu}).
This research was supported in part by the U.S. Air Force Office of Scientific Research (AFOSR) MURI grant FA9550-10-1-0573.}
}
\begin{document}

\maketitle
\thispagestyle{empty}
\pagestyle{empty}

\begin{abstract}
This paper studies the opinion dynamics that result when individuals consecutively discuss a sequence of issues. Specifically, we study how individuals' self-confidence levels evolve via a reflected appraisal mechanism. Motivated by the DeGroot-Friedkin model, we propose a Modified DeGroot-Friedkin model which allows individuals to update their self-confidence levels by only interacting with their neighbors and in particular, the modified model allows the update of self-confidence levels to take place in finite time without waiting for the opinion process to reach a consensus on any particular issue.
We study properties of this Modified DeGroot-Friedkin model and compare the associated equilibria and stability with those of the original DeGroot-Friedkin model.
Specifically, for the case when the interaction matrix is doubly stochastic, we show that for the modified model, the vector of individuals' self-confidence levels converges to a unique nontrivial equilibrium which for each individual is equal to ${1\over n}$, where $n$ is the number of individuals. This implies that eventually individuals reach a democratic state.
\end{abstract}

\section{Introduction}
Over the years, the advancement in information technology has enabled individuals to be more closely connected and the rapid expansion of online social networks has provided huge amount of data available for analysis regarding how individuals interact over networks. Consequently, much research attention has been drawn to understand how an individual's opinion evolves over time, in particular, how to model the underlying process of opinion formation \cite{DeGroot,Johnsen,Krause}.
There has been increasing interest in developing models of opinion dynamics to capture individuals' interaction, and some of these as relevant to the topic of this paper will be mentioned later. In the literature,
two main approaches have been adopted on how each individual updates her opinion:
probabilistic \cite{Acemoglu,yildiz} and deterministic \cite{DeGroot,Johnsen,Krause}.

In social sciences, quite a few models have been proposed for opinion dynamics.
Notable among them are the three classical models, namely, the DeGroot model \cite{DeGroot}, the Friedkin-Johnsen model \cite{Johnsen}, and the Krause model \cite{Krause}. In the DeGroot model, each individual has a fixed set of neighbors and the local interaction is captured by taking the convex combination of her own opinion and the opinions of her neighbors at each time step. The model can be extended naturally to the case in which the neighbor sets change over time. The Friedkin-Johnsen model is a variation of the DeGroot model in which each individual is assumed to adhere to her initial opinion to a certain degree, which brings in some level of stubbornness. The Krause model defines the neighbor sets in a different way. Each individual takes those individuals whose opinions differ from her by no more than a certain confidence level as her neighbors. It turns out that the Krause model is nonlinear, while the first two models lead to linear opinion updates.

Some recent works have extended the classical models to include more variations. For example, the presence of stubborn individuals has received increasing attention \cite{Acemoglu,yildiz,srikant}.
In \cite{Acemoglu} and \cite{yildiz}, the effects of stubborn individuals who never update their opinions are investigated in a randomized gossiping process. In \cite{srikant}, the opinion formation process is regarded as a local interaction game
and the concept of the stubbornness of an individual regarding her initial opinion is introduced. Other works have extended the Krause model \cite{John} or utilized the idea of confidence level \cite{Bullo2,Bullo3,Bullo6}. The work of \cite{John} introduces and studies a variation of the Krause model, which involves a continuum of agents. In contrast with the Krause model, the neighbors of an individual in \cite{Bullo2} are defined to be those individuals whose influence range contains this individual. The works of \cite{Bullo3,Bullo6} bring exogenous factors, such as the influence of media, into the model and each individual updates her opinion via the opinions of the population inside the individual's confidence range and the information from an exogenous input in that range.

A particularly interesting recent work, which motivates this paper, is the DeGroot-Friedkin model proposed by Jia et al. \cite{Bullo1,Bullo5}. The DeGroot-Friedkin model in \cite{Bullo1,Bullo5} contains two stages and studies the evolution of self-confidence, i.e., how confident an individual is for her opinions on a sequence of issues. In the first stage, individuals update their opinions for a particular issue according to the classical DeGroot model, and in the second stage, the self-confidence for the next issue is governed by the reflected appraisal mechanism studied in \cite{friedkin_reflected,reflected}. Reflected appraisal mechanism, in simple words, describes the phenomenon that individuals' self-appraisals on some dimension (e.g., self-confidence, self-esteem) are influenced by the appraisals of other individuals on them. An extended DeGroot-Friedkin model which includes stubborn individuals has been investigated in \cite{Bullo4}.


Motivated by the original model, we propose a Modified DeGroot-Friedkin model in this paper. For the Modified DeGroot-Friedkin model, we implement the ideas in the original DeGroot-Friedkin model in a distributed way to consider the situation when the process of self-confidence updates takes place within finite time steps for each issue. When the updates take place after infinite time steps, i.e., after the estimated values converge, the original model is exactly recovered (see Section \ref{sec_onestep} for details). Specifically, this paper studies the case when self-confidence is updated after every discussion of an issue. A complete study for the modified model when the interaction matrix is doubly stochastic is provided. We show that the self-confidence vector asymptotically converges to the equal-weights vector $\frac{1}{n}\mathbf{1}$.

The rest of the paper is organized as follows: in Section \ref{sec_DeGroot}, we review the original DeGroot-Friedkin model, and then in Section \ref{sec_onestep}, we discuss some motivations and introduce the Modified DeGroot-Friedkin model. In Section \ref{sec_doubly}, we present the equilibria and stability analysis for the modified model in terms of the doubly stochastic interaction matrix. Finally, we conclude and discuss some future directions in Section \ref{sec_conclusion}.

\textbf{Notation.} All vectors are assumed to be column vectors. For any positive integer n, we use $[n]$ to denote the set $\{1,2,\dots,n\}$. For any two sets $\mathcal{A}$ and $\mathcal{B}$, we use $\mathcal{A}\setminus\mathcal{B}=\{x|x\in\mathcal{A},x\notin\mathcal{B}\}$ to denote the set difference. We use $x'$ to denote the transpose of a vector, and $A'$ to denote the transpose of a matrix. We define $\mathbf{1}$ to be the vector with all entries equal to one in Euclidean space $\mathbb{R}^n$. We use $e_i$ to denote the vector with $1$ in the $i$th entry and $0$ for all other entries. For any two real vectors $x,\:y\in \mathbb{R}^n$, we use $x\geq y$ to denote $x_i\geq y_i$, for all $i\in[n]$, and $x>y$ to denote $x_i>y_i$, for all $i\in[n]$. In addition, $I$ denotes the $n\times n$ identity matrix. For a vector $x$, we use diag$(x)$ to represent the diagonal matrix with the $i$th diagonal entry being $x_i$. A stochastic matrix $A$ is a nonnegative matrix with row sum equal to $1$, i.e., $a_{ij}\geq 0$ for all $i$ and $j\in[n]$, and $\sum_{j=1}^n a_{ij}=1$, for all $i\in[n]$. A left stochastic matrix is a nonnegative matrix with column sum equal to $1$, i.e., $a_{ij}\geq 0$ for all $i$ and $j\in[n]$, and $\sum_{i=1}^n a_{ij}=1$, for all $j\in[n]$. A matrix is doubly stochastic if it is both stochastic and left stochastic. Finally, we use $\Delta$ to denote the $n$-simplex, i.e., $\Delta=\{x\in \mathbb{R}^n|x\geq0,\mathbf{1}'x=1\}$.

\section{The DeGroot-Friedkin Model} \label{sec_DeGroot}
\subsection{Opinion dynamics for a single issue}
We consider a social network with $n>1$ individuals labeled from $1$ to $n$. Each individual $i$ is able to communicate with certain other individuals called individual $i$'s neighbors. Neighbor relations are described by a directed graph $\mathbb{G}$. We will call $\mathbb{G}$ the neighbor graph, in which nodes correspond to individuals and directed edges represent the neighbor relations, i.e., $j$ is a neighbor of $i$ if $(i,j)$ is a directed edge. Consider the case when $n$ individuals are discussing a sequence of issues in the network; let us label each issue as $\{0,1,2,3,\dots\}$ with the understanding that issue $s+1$ will be discussed right after issue $s$. For a fixed issue $s \in \{0,1,2,3,\dots\}$, we denote the $i$th individual's opinion for issue $s$ at time $t$ to be $y_i(s,t)\in \mathbb{R}$. For each issue $s$, the update of $y_i(s,t)$ is determined by the DeGroot model
\begin{equation} \label{degroot_component}
y_i(s,t+1)=w_{ii}(s)y_i(s,t)+\sum_{j=1,j \neq i}^nw_{ij}(s)y_j(s,t),
\end{equation}
or in matrix form
\begin{equation} \label{degroot_matrix}
y(s,t+1)=W(s)y(s,t)
\end{equation}
where $W(s)$ is called the influence matrix and assumed to be stochastic.
From (\ref{degroot_component}), the opinion of individual $i$ at time $t+1$ is a convex combination of all the individuals' opinions at the previous time $t$. We define $w_{ii}(s)$ to be the self-confidence of $i$th individual, i.e., to what extent the $i$th individual adheres to his opinion on issue $s$ at previous time or how confident the $i$th individual is for his opinion on issue $s$ at previous time. Correspondingly, off-diagonal entry $w_{ij}(s)$ determines to what extent, the $i$th individual's opinion will be affected by others.

The work of \cite{Bullo1,Bullo5} studied the evolution of self-confidence $w_{ii}(s)$ for a sequence of issues. For simplicity, let $x_i(s)=w_{ii}(s)$, and we call the vector $x(s)$ as the self-confidence vector for issue $s$. Since $W(s)$ is assumed to be stochastic, $1-x_i(s)$ is then the total weight that individual $i$ assigns to her neighbors. The DeGroot-Friedkin model in \cite{Bullo1,Bullo5} decomposes $w_{ij}(s)$ as
\begin{equation} \label{decomposition}
w_{ij}(s)=(1-x_i(s))c_{ij}
\end{equation}
 Let $C=[c_{ij}]$ be the matrix with entries equal to $c_{ij}$. Matrix $C$ is compliant with the neighbor graph $\mathbb{G}$, and $\mathbb{G}$ is assumed to be strongly connected with no self-loops and fixed across issues. So, $c_{ii}=0$ and $C$ is irreducible. Since $W(s)$ is assumed to be stochastic, from the decomposition (\ref{decomposition}), one can see that the matrix $C$ will be stochastic. We will call $C$ the relative interaction matrix and $c_{ij}$ is correspondingly the relative interpersonal weight that the $i$th individual assigns to her neighbors.

In summary, the final dynamics for a single issue  $s$ is
\begin{equation*}
y_i(s,t+1)=x_i(s)y_i(s,t)+\sum_{j=1,j \neq i}^n(1-x_i(s))c_{ij}y_j(s,t),
\end{equation*}
or in matrix form
\begin{equation}
y(s,t+1)=W(x(s))y(s,t)
\end{equation}
where $W(x(s))=\text{diag}(x(s))+(I-\text{diag}(x(s)))C$.
\subsection{Evolution of self-confidence across a sequence of issues}
We use $u(x(s))$ to denote the normalized (i.e., $\mathbf{1}'u(x(s))=1$) left eigenvector of the influence matrix $W(x(s))$ associated with the eigenvalue 1. From Perron-Frobenius theorem, $u(x(s))>0$ and is unique. We will call $u(x(s))$ the dominant left eigenvector.

It is well known that for the DeGroot model, the limit of the opinions for each issue $s$ is:
\begin{equation} \label{opinion_converge}
\lim_{t \rightarrow \infty}y(s,t)=\lim_{t \rightarrow \infty}W(x(s))^ty(s,0)=u(x(s))'y(s,0)\mathbf{1}
\end{equation}
Therefore, the individuals' opinions for issue $s$ converge to a convex combination of their initial opinions on issue $s$ and the coefficients $u(x(s))$ describe how much each individual contributes to the final opinions. In other words, $u_i(x(s))$ can be regarded as the social power for individual $i$ in determining the final outcomes for a particular issue $s$.

For a sequence of issues $s\in \{0,1,2,3,\dots\}$, the reflected appraisal mechanism introduced in \cite{friedkin_reflected} is to let $x_i(s+1)=u_i(x(s))$. The underlying rationale is that as described in the introduction, the reflected appraisal mechanism captures the property  that individuals' self-appraisals on some dimension (in this case, the self-confidence for issue $s+1$) are affected by the appraisals of other individuals on them. Note that $u_i(x(s))$ represents the social power of individual $i$ in determining the outcomes of issue $s$. If individual $i$ has larger social power, it is very likely that he will be more confident on his own opinion when discussing the next issue $s+1$. Note that for issue $s\geq 1$, the self-confidence vector $x(s)$ necessarily takes value inside $\Delta$ from the update, so it is assumed in \cite{Bullo1,Bullo5} that the self-confidence vector is in $\Delta$ for all issues $s \in \{0,1,2,3,\dots\}$.

Finally, the DeGroot-Friedkin model is
\begin{equation} \label{reflected}
x(s+1)=u(x(s))
\end{equation}
where $u(x(s)) \in \Delta$ and is the dominant left eigenvector of the influence matrix
\begin{equation} \label{demodel}
W(x(s))=\text{diag}(x(s))+(I-\text{diag}(x(s)))C
\end{equation}
The interaction matrix $C$ is assumed to be stochastic and irreducible with diagonal entries being zero.
\begin{theorem}\label{the1}
\cite[Theorem 4.1]{Bullo1} \emph{Suppose that $n\geq 3$ and all $n$ individuals adhere to the DeGroot-Friedkin model defined by (\ref{reflected}) and (\ref{demodel}). Suppose that the underlying neighbor graph $\mathbb{G}$ is not a star\footnotemark[1]. Then, the following statements are true:\\
(1). (\textbf{Equilibria}) The set of fixed points for $x(s+1)=u(x(s))$ is $\{e_1,...,e_n,x^*\}$, where $x^*$ lies in the interior of the simplex $\Delta$.\\
(2). (\textbf{Stability}) For all initial conditions $x(0)\in \Delta \setminus \{e_1,...,e_n\}$, the self-confidence $x(s)$ converges to the equilibrium configuration $x^*$ as $s \rightarrow \infty$.\\
(3). If, in addition, the interaction matrix $C$ is doubly stochastic, then $x^*=\frac{1}{n}\mathbf{1}$ and $x(s)\rightarrow \frac{1}{n}\mathbf{1} $ as $s \rightarrow \infty$.}
\end{theorem}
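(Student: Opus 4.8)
\emph{Proof plan.} The plan is to convert the implicit recursion (\ref{reflected}) into an explicit map on $\Delta$, analyze its fixed points, show the vertices are repelling while the interior fixed point is globally attracting, and finally specialize to the doubly stochastic case. The first step is a closed form for $u(x)$: writing $D=\text{diag}(x)$ so that $W(x)=D+(I-D)C$, and setting $v'=u(x)'(I-D)$, the relation $u'W=u'$ is equivalent to $v'C=v'$; when $x\notin\{e_1,\dots,e_n\}$ every $x_i<1$, so $v$ has strictly positive entries $v_i=u_i(1-x_i)$ and hence, by Perron--Frobenius ($C$ irreducible), is a positive multiple of the dominant left eigenvector $c$ of $C$. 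Normalizing by $\mathbf{1}'u=1$ yields
\[
u_i(x)=\frac{c_i/(1-x_i)}{\sum_{j=1}^{n}c_j/(1-x_j)},\qquad x\in\Delta,\ x\notin\{e_1,\dots,e_n\},
\]
while a direct check (node $k$ becomes absorbing in $W(e_k)$) gives $u(e_k)=e_k$. So (\ref{reflected}) reads $x(s+1)=F(x(s))$ for the explicit rational map $F$ above, and $F$ sends $\Delta\setminus\{e_1,\dots,e_n\}$ into $\operatorname{int}\Delta$.

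For the equilibria, the vertices are fixed points by the last line, and the interior fixed points are exactly the $x\in(0,1)^n$ with $x_i(1-x_i)=c_i(1-x'x)$ for all $i$ (summing these relations gives $\mathbf{1}'x=1$, so the simplex constraint is automatic). Treating $\tau:=1-x'x$ as a parameter and solving the quadratics gives $x_i=\tfrac12\bigl(1+s_i\sqrt{1-4c_i\tau}\bigr)$ for sign choices $s_i\in\{\pm1\}$; imposing self-consistency reduces everything to the single scalar equation $\sum_i s_i\sqrt{1-4c_i\tau}=2-n$. Since $2-n\le-1$ for $n\ge3$, at most one $s_i$ can be $+1$ (two or more would force some $c_i\tau=0$), so only $n+1$ branches arise; on the all-minus branch the left-hand side is strictly increasing in $\tau$, hence has at most one root, and the remaining branches (a single coordinate exceeding $\tfrac12$) need a more careful case analysis to show that exactly one interior solution $x^*$ exists overall. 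This is where the non-star hypothesis is used: $c_k=\sum_{j\ne k}c_jc_{jk}\le 1-c_k$ with equality only for a star, so $c_k<\tfrac12$ for every $k$, which is precisely what prevents the interior equilibrium from merging with a vertex as it does for a star graph.

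For stability, consider first the vertices. Estimating $F$ near $e_k$ gives $1-F(x)_k=\frac{1-c_k}{c_k}\,(1-x_k)+O\bigl((1-x_k)^2\bigr)$, and $c_k<\tfrac12$ makes the coefficient strictly larger than $1$, so $1-x_k$ grows and every vertex is unstable --- this is the role of the non-star assumption in part (2). Combined with $F(\Delta\setminus\{e_1,\dots,e_n\})\subseteq\operatorname{int}\Delta$ and a short argument that no interior trajectory can accumulate on the boundary, the $\omega$-limit set of any trajectory starting in $\Delta\setminus\{e_1,\dots,e_n\}$ is a nonempty compact subset of $\operatorname{int}\Delta$. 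Promoting this to convergence to $x^*$ --- excluding periodic orbits and every other $\omega$-limit --- is, together with the uniqueness above, the main obstacle; I would attack it with a Lyapunov function, e.g. one controlling the spread of the ratios $x_i(1-x_i)/c_i$ about their common equilibrium value, or by showing $F$ is eventually a contraction in Hilbert's projective metric on a forward-invariant compact subset of $\operatorname{int}\Delta$.

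Finally, part (3) is short. If $C$ is doubly stochastic then $\mathbf{1}'C=\mathbf{1}'$, so $c=\tfrac1n\mathbf{1}$; substituting $c_i=1/n$ into $x_i(1-x_i)=c_i(1-x'x)$ one verifies that $x=\tfrac1n\mathbf{1}$ is a solution, so by the uniqueness in part (1) $x^*=\tfrac1n\mathbf{1}$. The convergence $x(s)\to\tfrac1n\mathbf{1}$ for every $x(0)\in\Delta\setminus\{e_1,\dots,e_n\}$ then follows from part (2).
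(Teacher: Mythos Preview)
The paper does not supply its own proof of Theorem~\ref{the1}: the result is quoted verbatim from \cite[Theorem~4.1]{Bullo1} and serves only as background for the Modified DeGroot--Friedkin model that the paper actually analyzes. So there is no in-paper argument to compare your proposal against.

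That said, your outline is the standard route taken in \cite{Bullo1}. The key reduction $u_i(x)=\dfrac{c_i/(1-x_i)}{\sum_j c_j/(1-x_j)}$ via the substitution $v'=u'(I-D)$ is exactly how the explicit map $F$ is obtained there, and your derivation of the fixed-point system $x_i(1-x_i)=c_i(1-x'x)$ and of the repulsion estimate $1-F(x)_k\approx\frac{1-c_k}{c_k}(1-x_k)$ near each vertex, together with the non-star consequence $c_k<\tfrac12$, are all correct and match the original. Your branch-counting argument for the sign choices $s_i$ is sound: with two or more $s_i=+1$ the sum $\sum_i s_i\sqrt{1-4c_i\tau}$ is strictly larger than $2-n$, since equality would force $c_k\tau=0$ for some $k$ while simultaneously $c_i\tau=\tfrac14$ for another, contradicting $c>0$.

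You are also honest about the two places where real work remains. First, uniqueness of the interior fixed point still needs the ``more careful case analysis'' you allude to: the all-minus branch and the $n$ single-plus branches must each be shown to contribute at most one interior solution in total, not one apiece. Second, and more substantially, your convergence argument stops at ``the $\omega$-limit set lies in $\operatorname{int}\Delta$'' and proposes a Lyapunov function or a Hilbert-metric contraction without constructing either. This is precisely the hard step in \cite{Bullo1}, where a specific Lyapunov function (built from $\max_i x_i/x_i^*$ and $\min_i x_i/x_i^*$) is used; neither of your suggested candidates is obviously monotone along trajectories, so this gap is genuine rather than routine. Part~(3) you handle correctly as a corollary of (1)--(2).
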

\footnotetext[1]{A directed graph is star if there is a node, called the center node, having directed edges to and from all other nodes, and for every other node, there are directed edges to and from only the center node.}


\section{Modified DeGroot-Friedkin Model}\label{sec_onestep}

As noted in the above section, the process of reflected appraisal for $x(s+1)$ (i.e., (\ref{reflected})) takes place only after opinions $y(s,t)$ of issue $s$ converges as suggested in (\ref{opinion_converge}), which takes many or infinite number of discussions (i.e., time steps). One may be able to know the self-confidence level for the next issue without waiting that long if she can compute the dominant left eigenvector $u(x(s))$. However, $u(x(s))$ requires global information about the network that is usually impossible and for large networks, a distributed update is often more preferable.

In order to answer the above questions, we first look at a distributed update scheme for self-confidence vector which was proposed in \cite{Bullo1}. Assume that for each individual, she estimates her social power $u_i(x(s))$ for issue $s$ along the time of discussions, and  let us denote the perceived social power for issue $s$ at time $t$ as $p_i(s,t)$. Further, assume that the $i$th individual knows the exact interpersonal weight her neighbors assign to her, i.e., $w_{ji}(s)$ for all $j$ in the neighbors of $i$. Then, every individual updates her perceived self-confidence $p_i(s,t)$ for issue $s$ according to
\begin{equation} \label{distri}
p_i(s,t+1)=w_{ii}(s)p_i(s,t)+\sum_{j=1,j\neq i}^nw_{ji}(s)p_j(s,t)
\end{equation}
which in matrix form is $p(s,t+1)=W(s)'p(s,t)$. We know that $\lim_{t\rightarrow \infty}p(s,t)=u(x(s))$ for all initial states $p(s,0)$ such that $\mathbf{1}'p(s,0)=1$.

However, in order to update $x(s+1)$, we still need to wait for a sufficiently long time for the perceived self-confidence $p(s,t)$ to converge. In order to simultaneously achieve a distributed and finite time update for the self-confidence levels, in this paper, we propose the following model, which is based on the distributed update (\ref{distri}).

Recall that $x_i(s)=w_{ii}(s)$, $w_{ji}(s)=(1-x_j(s))c_{ji}$ and $C$ is stochastic with diagonal entries being zero. Then,  (\ref{distri}) is equivalent to
\begin{equation} \label{modify_distri}
p_i(s,t+1)=x_i(s)p_i(s,t)+\sum_{j=1}^n(1-x_j(s))c_{ji}p_j(s,t)
\end{equation}
Since we want to update $x(s+1)$ in finite time steps instead of waiting for the update of opinions $y(s,t)$ to converge, a straightforward modification is that we update $x(s+1)$ in finite time steps according to (\ref{modify_distri}), i.e.,
\begin{equation} \label{finite_step}
x_i(s+1)=p_i(s,T)
\end{equation}
for some finite time steps $T$. This means that for each issue, after $T$ times discussions, individuals will update their self-confidence levels for the next issue based on the discussions. This model reasonably captures the real scenario in that when we are discussing a sequence of issues with others, we only discuss it for limited times and the opinions need not necessarily converge. Note that if $T$ goes to infinity, the original DeGroot-Friedkin model is exactly recovered. Therefore, in this sense, the above model is a generalization of the original model. In this paper, we focus on the one-step case when $T=1$. Then, (\ref{modify_distri}) becomes
\begin{equation} \label{onestep_component}
x_i(s+1)=x_i(s)x_i(s)+\sum_{j=1}^n(1-x_j(s))c_{ji}x_j(s)
\end{equation}

From (\ref{onestep_component}), to update self-confidence, one only needs to know the self-confidence levels of her neighbors and the interpersonal weight $c_{ji}$ from her neighbors. We will refer to (\ref{onestep_component}) as the Modified DeGroot-Friedkin model. We make the same assumption about the neighbor graph $\mathbb{G}$ as in the DeGroot-Friedkin model, namely, the neighbor graph $\mathbb{G}$ is strongly connected with no self-loops and fixed over time. Note that we drop the argument $t$ in (\ref{onestep_component}) since we update $x(s)$ in every time step.

In summary, the Modified DeGroot-Friedkin model in matrix form is
\begin{equation} \label{onestep_matrix}
x(s+1)=C'x(s)+X(s)x(s)-C'X(s)x(s)
\end{equation}
where $X(s)=\text{diag}(x(s))$ is a diagonal matrix. The relative interaction matrix $C$ is assumed to be irreducible and stochastic with diagonal entries being zero.
\begin{lemma} \label{lemma1}
\emph{Suppose that all $n$ individuals adhere to the Modified DeGroot-Friedkin model defined by (\ref{onestep_matrix}). Suppose that the interaction matrix $C$ is stochastic and irreducible with diagonal entries being zero. Then, the sum of self-confidence levels is constant, i.e., $\sum_{i=1}^nx_i(s)=\sum_{i=1}^nx_i(0)$, for all $s\in\{0,1,2,\dots\}$.}
\end{lemma}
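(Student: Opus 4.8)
The plan is to prove the slightly stronger one-step identity $\mathbf{1}'x(s+1)=\mathbf{1}'x(s)$ for every $s\in\{0,1,2,\dots\}$, from which the lemma follows by an immediate induction on $s$ (since $\sum_{i=1}^n x_i(s)=\mathbf{1}'x(s)$). To establish the one-step identity I would simply left-multiply the matrix form of the dynamics (\ref{onestep_matrix}) by the all-ones row vector $\mathbf{1}'$, giving
\begin{equation*}
\mathbf{1}'x(s+1)=\mathbf{1}'C'x(s)+\mathbf{1}'X(s)x(s)-\mathbf{1}'C'X(s)x(s).
\end{equation*}

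The only fact needed is that $C$ is stochastic, i.e.\ $C\mathbf{1}=\mathbf{1}$, or equivalently $\mathbf{1}'C'=\mathbf{1}'$. Applying this to the first term yields $\mathbf{1}'C'x(s)=\mathbf{1}'x(s)$, and applying it to the third term (with the vector $X(s)x(s)$ in place of $x(s)$) yields $\mathbf{1}'C'X(s)x(s)=\mathbf{1}'X(s)x(s)$. Hence the second and third terms cancel and we are left with $\mathbf{1}'x(s+1)=\mathbf{1}'x(s)$. Iterating, $\mathbf{1}'x(s)=\mathbf{1}'x(0)$ for all $s$, which is the claim.

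Honestly, there is no real obstacle here: the statement is a one-line computation whose whole content is that row-stochasticity of $C$ forces the two ``diag'' terms in (\ref{onestep_matrix}) to be annihilated by $\mathbf{1}'$ in the same way. The one point worth flagging in the write-up is that the identity requires no assumption on where $x(s)$ lives (in particular it is \emph{not} needed that $x(s)\in\Delta$): conservation of $\sum_i x_i$ holds for arbitrary real iterates and is purely a consequence of the structure of the update. This is the modified-model analogue of the fact that, in the original DeGroot--Friedkin model, $u(x(s))$ automatically lies in $\Delta$; here the conserved quantity is simply the coordinate sum, and it will be convenient to record it now because later stability/equilibrium arguments for the doubly stochastic case can restrict attention to the invariant affine slice $\{x:\mathbf{1}'x=\mathbf{1}'x(0)\}$.
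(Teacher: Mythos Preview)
Your proposal is correct and follows essentially the same approach as the paper: left-multiply the dynamics by $\mathbf{1}'$, use $\mathbf{1}'C'=\mathbf{1}'$ from row-stochasticity of $C$, and observe that the two quadratic terms cancel. The paper additionally writes the surviving terms componentwise as $\sum_i x_i(s)+\sum_i x_i(s)^2-\sum_i x_i(s)^2$, but this is cosmetic.
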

\begin{proof}
Multiply both sides of (\ref{onestep_matrix}) from the left by $\mathbf{1}'$.
Then, the left hand side is $\mathbf{1}'x(s+1)=\sum_{i=1}^nx_i(s+1)$. Recall that $C$ is stochastic, so $\mathbf{1}'C'=\mathbf{1}'$. Then, the right hand side is
\begin{equation*}
\begin{split}
  & \quad\mathbf{1}'C'x(s)+\mathbf{1}'X(s)x(s)-\mathbf{1}'C'X(s)x(s)\\
     &=\sum_{i=1}^nx_i(s)+\sum_{i=1}^nx_i(s)^2-\sum_{i=1}^nx_i(s)^2 =\sum_{i=1}^nx_i(s)
\end{split}
\end{equation*}
Therefore, $\sum_{i=1}^nx_i(s+1)=\sum_{i=1}^nx_i(s)=\sum_{i=1}^nx_i(0)$.
\end{proof}

\textbf{Remark:} We will make the same assumption on the initial conditions as in the original DeGroot-Friedkin model, namely, $x(0)\in\Delta$. One reason for doing this is that from (\ref{onestep_component}), we can see if $x_i(s)\in [0,1]$, one can ensure that $x_i(s+1)$ is positive since the square term $x_i(s)^2\leq x_i(s)$. Further, by assuming $x(0)\in \Delta$, Lemma \ref{lemma1} ensures that $\sum_{i=1}^nx_i(s)=1$, for all $s\in\{0,1,2,\dots\},$ so that $x_i(s)$ will always be bounded in $[0,1]$. If we instead assume $x_i(0)\in [0,1]$, for all $i\in[n]$, but place no condition on the sum, then $x_i(s)$ will not always be in $[0,1]$ for future updates, and $x_i(s)$ may be unbounded eventually. Simulation results also confirm the unboundedness property. A future direction of research is to relax this assumption, because while it is reasonable to assume that the self-confidence levels are within $[0,1]$, it is less reasonable that the self-confidence levels should sum up to 1, especially in large social networks.
\hfill$\qed$

\begin{lemma} \label{lemma2}
\emph{For all $i\in[n]$, $e_i$ is an equilibrium of the Modified DeGroot-Friedkin model defined by (\ref{onestep_matrix}).}
\end{lemma}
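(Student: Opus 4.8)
The plan is to verify directly that the map defined by (\ref{onestep_matrix}) fixes each $e_i$. Setting $x(s)=e_i$, the diagonal matrix $X(s)=\text{diag}(e_i)$ has a single nonzero entry, a $1$ in position $(i,i)$, so the product $X(s)x(s)=\text{diag}(e_i)e_i$ equals $e_i$ itself. This is the one small observation the argument rests on.

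Next I would substitute this into the right-hand side of (\ref{onestep_matrix}): it becomes $C'e_i + e_i - C'X(s)x(s) = C'e_i + e_i - C'e_i = e_i$, since $X(s)x(s)=e_i$ makes the first and third terms cancel identically. Hence $x(s+1)=e_i$, which is exactly the claim. (It is worth noting in passing that $C'e_i$ is the $i$th row of $C$ written as a column, whose $i$th entry is $c_{ii}=0$, consistent with $e_i$ being a valid self-confidence vector, though this fact is not actually needed for the cancellation.)

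As an alternative route, one could instead work from the component form (\ref{onestep_component}): with $x(s)=e_i$, the $i$th component gives $x_i(s+1)=1+ (1-1)c_{ii}=1$, and for any $k\neq i$ the $k$th component gives $x_k(s+1)=0+(1-1)c_{ik}=0$, since $x_j(s)$ is nonzero only for $j=i$ where $1-x_i(s)=0$. Either computation establishes the result. There is no real obstacle here; the only thing to be careful about is correctly identifying $X(s)x(s)=e_i$ (equivalently, $e_i^{\,2}=e_i$ componentwise) so that the interaction terms drop out.
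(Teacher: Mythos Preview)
Your proof is correct and follows essentially the same approach as the paper: both simply substitute $x(s)=e_i$ into (\ref{onestep_matrix}), use $\text{diag}(e_i)e_i=e_i$, and observe that the $C'$-terms cancel to leave $e_i$. Your additional componentwise check via (\ref{onestep_component}) is a fine sanity check but not needed.
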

\begin{proof}
Substituting $x(s)=e_i$ into (\ref{onestep_matrix}), it is straightforward to see that the right hand side is $C'e_i+\text{diag}(e_i)e_i-C'\text{diag}(e_i)e_i=e_i$.
\end{proof}

Now, we show some properties regarding the Modified DeGroot-Friedkin model which will be used in proving other results later.
\begin{lemma} \label{lemma 3}
\emph{Suppose that $n\geq 3$ and all $n$ individuals adhere to the Modified DeGroot-Friedkin model defined by (\ref{onestep_matrix}). Suppose that the relative interaction matrix $C$ is stochastic and irreducible with diagonal entries being zero. If $x(0)\in \Delta_n \setminus \{e_1,...,e_n\}$, then the following properties hold:
\begin{enumerate}[(i)]
\item If $x(0)>0$, then $x(s)>0$, for all $s\geq 1$.
\item If $x(s)>0$, then matrix $T(s)=C'+X(s)-C'X(s)$ is irreducible.
\item Let $m$ be the number of zero entires in $x(0)$. If $x(0)\geq 0$, then after finite steps $\tau\leq m$, $x(\tau)>0$.
\end{enumerate}}
\end{lemma}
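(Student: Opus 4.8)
The plan is to carry out all three parts directly from the recursion written in the form $x(s+1)=T(s)x(s)$, where $T(s)=C'+X(s)-C'X(s)=C'(I-X(s))+X(s)$, using the entrywise description $T(s)_{ii}=x_i(s)$ and, for $i\neq j$, $T(s)_{ij}=c_{ji}(1-x_j(s))$ (here $c_{ii}=0$ has been used). Before the three items I would establish two standing facts. First, $x(s)\in\Delta$ for every $s$: by (\ref{onestep_component}), whenever $x(s)\in[0,1]^n$ one has $x_i(s+1)=x_i(s)^2+\sum_{j\neq i}(1-x_j(s))c_{ji}x_j(s)\geq x_i(s)^2\geq 0$, while $\mathbf{1}'x(s+1)=\mathbf{1}'x(0)=1$ by Lemma~\ref{lemma1}, so $x(s+1)\in\Delta$; this is exactly the observation recorded in the Remark after Lemma~\ref{lemma1}. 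Second, under the hypothesis $x(0)\in\Delta\setminus\{e_1,\dots,e_n\}$ the trajectory never hits a vertex: if $x(s)=e_j$ for some $s\geq 1$, then the $i$th component of (\ref{onestep_component}) for $i\neq j$ reads $0=x_i(s-1)^2+(\text{nonnegative})$, forcing $x_i(s-1)=0$ for all $i\neq j$, hence $x(s-1)=e_j$; iterating gives $x(0)=e_j$, a contradiction. Together with $x(s)\in\Delta$ and $n\geq 3$, this yields $x_j(s)<1$ for all $j$ and $s$, so $1-x_j(s)>0$.

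Given these preliminaries, parts (i) and (ii) are short. For (i): if $x(0)>0$ then, since $x_j(s)\in[0,1]$, the bound $x_i(s+1)\geq x_i(s)^2$ shows that a positive coordinate stays positive, hence $x(s)>0$ for all $s\geq 1$ (in fact for all $s\geq 0$). For (ii): if $x(s)>0$ then $0<x_j(s)<1$ for all $j$, so from the entrywise formula $T(s)_{ij}>0$ exactly when $c_{ji}>0$ (for $i\neq j$); thus $T(s)$ has precisely the off-diagonal zero pattern of $C'$. Since irreducibility of a square matrix depends only on the pattern of its off-diagonal nonzero entries, and $C$ — hence $C'$ — is irreducible, $T(s)$ is irreducible.

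For part (iii), set $Z(s)=\{i\in[n]:x_i(s)=0\}$, so $|Z(0)|=m$. By the argument in (i), positive coordinates stay positive, hence $Z(s+1)\subseteq Z(s)$; it remains to show $Z(s)$ strictly shrinks as long as it is nonempty. If $Z(s)\neq\emptyset$, then since $x(s)\in\Delta$ is not the zero vector, $Z(s)$ is a nonempty proper subset of $[n]$, so strong connectivity of $\mathbb{G}$ supplies an edge from $Z(s)^c$ into $Z(s)$, i.e.\ indices $j\notin Z(s)$ and $i\in Z(s)$ with $c_{ji}>0$. Then $x_i(s+1)\geq(1-x_j(s))c_{ji}x_j(s)>0$, using $x_j(s)\in(0,1)$ and $c_{ji}>0$, so $i\notin Z(s+1)$ and $|Z(s+1)|\leq|Z(s)|-1$. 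Hence $|Z(\cdot)|$ decreases by at least one per step until it reaches $0$, so $x(\tau)>0$ for some $\tau\leq m$.

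The only genuinely delicate point is the second standing fact. If the trajectory were allowed to reach some $e_j$, the "becomes positive" step in (iii) would break (there $x_j(s)=1$ makes the lower bound vanish), and the claim would in fact be false, since $e_j$ is an equilibrium by Lemma~\ref{lemma2}. So the backward-induction observation that reaching $e_j$ forces $x(0)=e_j$ is precisely where the hypothesis $x(0)\notin\{e_1,\dots,e_n\}$ is essential; the remainder is bookkeeping with the entrywise formula for $T(s)$ and the inequality $x_i(s+1)\geq x_i(s)^2$.
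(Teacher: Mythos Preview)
Your proof is correct and follows essentially the same approach as the paper: part~(i) via $x_i(s+1)\geq x_i(s)^2$, part~(ii) via $T(s)$ inheriting the off-diagonal zero pattern of $C'$, and part~(iii) via irreducibility forcing an edge from the positive set into the zero set. Your explicit backward-induction argument that the trajectory never reaches a vertex is a nice addition the paper leaves implicit (the paper effectively relies on the fact that the zero set starts with size $m\leq n-2$ and never grows, so there are always at least two positive entries), and your direct formulation of~(iii) is just the contrapositive of the paper's contradiction argument.
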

\begin{proof}
\emph{(i)} The dynamics (\ref{onestep_matrix}) can be written as:
\begin{equation*}
x(s+1)=C'(I-X(s))x(s)+X(s)x(s)
\end{equation*}
If $x(0)>0$, then the vector $X(0)x(0)=(x_1^2(0),\dots,x_n^2(0))'>0$. In addition, $(I-X(0))$ is a diagonal matrix with positive diagonal entries when $x(0)>0$. Since $C'$ is a nonnegative matrix, $C'(I-X(0))x(0)\geq0$. Therefore, we conclude that $x(1)=C'(I-X(0))x(0)+X(0)x(0)>0$. Applying the same argument repeatedly, we conclude that $x(s)>0$, for all $s\geq 1$.

\emph{(ii) }Since $C$ is assumed to be irreducible, $C'$ is also irreducible. $T(s)=C'+X(s)-C'X(s)=C'(I-X(s))+X(s)$. If $x(s)>0$, then $(I-X(s))$ is a diagonal matrix with positive diagonal entries. When nonnegative matrix $C'$ is multiplied by a diagonal matrix with positive diagonal entries, if $c'_{ij}=0$, then $(C'(I-X(s)))_{ij}=0$; if $c'_{ij}>0$, then $(C'(I-X(s)))_{ij}>0$. The structure of $C'$ does not change, so $C'(I-X(s))$ is also irreducible with diagonal entries being zero. It is well known that a matrix is irreducible if and only if the underlying directed graph represented by the matrix is strongly connected. Strong connectedness means that starting from any node, one can find a directed path to any other node in the graph. Therefore, $C'(I-X(s))+X(s)$ only adds self-loops in the underlying directed graph represented by $C'(I-X(s))$, which will not affect the connectivity. So, we conclude that $T(s)$ is irreducible.

\emph{(iii) }The dynamics (\ref{onestep_matrix}) can be written as:
\begin{equation} \label{lemma3_dynamic}
\begin{split}
x(s+1)&=C'(I-X(s))x(s)+X(s)x(s)\\
      &=Pb(s)+a(s)
\end{split}
\end{equation}
where for simplicity, we use $P=C'$, vector $a(s)=(x_1^2(s),\dots,x_n^2(s))'$, and vector $b(s)=((1-x_1(s))x_1(s),\dots,(1-x_n(s))x_n(s))'$. So, $P$ is irreducible. Suppose that the number of zero entries in $x(0)$ is $m$, without loss of generality, let us assume $x_i(0)=0$, for $i\in \{1,2,\dots,m\}$, and $x_i(0)>0$ for $i \in\{m+1,m+2,\dots,n\}$.

We first claim that the number of zero entries can not increase after one update, that is, the number of zero entries in $x(1)$ is at most $m$. From the structures of $a(s)$ and $b(s)$, we see that $a_i(0)=0$, $b_i(0)=0$ for $i\in\{1,2,\dots,m\}$ and $a_i(0)>0$, $b_i(0)>0$ for $i \in\{m+1,m+2,\dots,n\}$. Since $P$ is a nonnegative matrix, from (\ref{lemma3_dynamic}), we see that necessarily, $x_i(1)>0$ for all $i\in\{m+1,m+2,\dots,n\}$, so the number of zero entries in $x(1)$ is at most $m$.

We then prove by contradiction that the number of zero entries must decrease. We first note that $P$ is irreducible so that the underlying directed graph represented by $P$ is strongly connected. We use the convention that if $p_{ij}>0$, then there is a directed edge from node $i$ to node $j$. Now, suppose that the number of zero entries in $x(1)$ is still $m$, then necessarily, $x_i(1)=0$ for $i\in\{1,2,\dots,m\}$. From the dynamics (\ref{lemma3_dynamic}), this is possible if and only if
\begin{equation}\label{lemma3_condition}
p_{ij}=0,\:\text{for all}\:i\in\{1,2,\dots,m\} \:\text{and}\:j\in\{m+1,m+2,\dots,n\}
\end{equation}
Transforming the condition (\ref{lemma3_condition}) into the underlying directed graph of $P$, it means that we can divide the $n$ nodes into two components. Component $1$ contains nodes $\{1,2,\dots,m\}$ and component $2$ contains nodes $\{m+1,m+2,\dots,n\}$. Further, there are no directed edges from component $1$ to component $2$. This is contradicted by the assumption that $P$ is irreducible. So, the number of zero entries in $x(1)$ must decrease.

Finally, since the number of zero entries decreases in $x(1)$, assume now that there are $k<m$ zero entries in $x(1)$ and let $\mathcal{Z}_0=\{i|x_i(1)=0,0\leq i\leq n\}$ and $\mathcal{Z}_1=\{i|x_i(1)>0,0\leq i\leq n\}$. Then, $|\mathcal{Z}_0|=k$ and $|\mathcal{Z}_1|=n-k$, where $|.|$ denotes cardinality. Applying the same reasoning, $x_i(2)>0$ for $i\in \mathcal{Z}_1$. Again from the dynamics (\ref{lemma3_dynamic}), the number of zero entries in $x(2)$ is $k$ if and only if
\begin{equation*}
p_{ij}=0,\:\text{for all}\:i\in \mathcal{Z}_0 \:\text{and}\:j\in \mathcal{Z}_1
\end{equation*}
which as before contradicts the assumption that $P$ is irreducible if we divide the nodes into two components with one component containing nodes $i\in \mathcal{Z}_0$ and one component containing nodes $j\in \mathcal{Z}_1$. So, the number of zero entries in $x(2)$ must decrease. Continuing the same reasoning, we conclude that for each update the number of zero entries in $x(s)$ must strictly decrease. Since the number of zero entries in $x(0)$ is $m$, after at most $m$ steps, all the entries of $x(s)$ will become positive.
\end{proof}

\section{Equilibria and Stability For The Case Of Doubly Stochastic $C$}\label{sec_doubly}
In this section, we will further explore equilibria and stability properties for the case when the relative interaction matrix $C$ is doubly stochastic and we will compare our results with the original DeGroot-Fredkin model

We first state the following Perron-Frobenius theorem for irreducible Metzler matrices, which is used for proving the next theorem. A Metzler matrix is a matrix whose off-diagonal entries are all nonnegative.
\begin{lemma}\label{lemma_pf}
\cite[Theorem 17]{positive} \emph{Let $M\in \mathbb{R}^{n\times n}$ be an irreducible Metzler matrix and $\sigma(M)$ be the set of eigenvalues. Then,\\
(i) $\mu(M)=\max_{\lambda\in\sigma(M)}Re(\lambda)$ is an algebraically simple eigenvalue of $M$.\\
(ii) Let $v_F$ be such that $Mv_F=\mu(M)v_F$. Then, $v_F$ is unique (up to scalar multiple) and $v_F> 0$.\\
(iii) If $v\geq 0$ but $v\neq 0$ is an eigenvector of $M$, then $Mv=\mu(M)v$, and hence, $v$ is a scalar multiple of $v_F$.}
\end{lemma}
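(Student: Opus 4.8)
The plan is to reduce the statement to the classical Perron--Frobenius theorem for irreducible nonnegative matrices by means of a diagonal shift. Since $M$ is Metzler, every off-diagonal entry is nonnegative, so for any scalar $\alpha \geq -\min_i m_{ii}$ the matrix $N \dfb M+\alpha I$ is entrywise nonnegative. Furthermore $N$ and $M$ have exactly the same off-diagonal zero pattern, hence induce the same directed graph up to self-loops, so $N$ is irreducible because $M$ is; and $N$, $M$ share all eigenvectors, with $\lambda\in\sigma(M)$ if and only if $\lambda+\alpha\in\sigma(N)$ and with the Jordan structure preserved under the shift.

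Next I would invoke the classical Perron--Frobenius theorem for the irreducible nonnegative matrix $N$: its spectral radius $\rho(N)$ is a positive, algebraically simple eigenvalue, the associated eigenvector $w$ is strictly positive and unique up to scalar multiple, and any nonnegative nonzero eigenvector of $N$ is a scalar multiple of $w$ (hence corresponds to $\rho(N)$). The key remaining point is to identify $\rho(N)-\alpha$ with $\mu(M)=\max_{\lambda\in\sigma(M)}Re(\lambda)$. On the one hand $\rho(N)-\alpha$ is a real eigenvalue of $M$, so $\rho(N)-\alpha\leq\mu(M)$. On the other hand, for every $\lambda\in\sigma(M)$ we have $\lambda+\alpha\in\sigma(N)$, so $\rho(N)\geq|\lambda+\alpha|\geq Re(\lambda+\alpha)=Re(\lambda)+\alpha$; taking the maximum over $\lambda$ gives $\rho(N)-\alpha\geq\mu(M)$. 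Hence $\mu(M)=\rho(N)-\alpha$.

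With this identification in hand, the three assertions follow by translating the conclusions of the classical theorem back through the shift. For (i), $\mu(M)=\rho(N)-\alpha$ is algebraically simple because $\rho(N)$ is and the shift preserves algebraic multiplicities. For (ii), any $v_F$ with $Mv_F=\mu(M)v_F$ is precisely a Perron eigenvector of $N$, hence unique up to scalar multiple and strictly positive. For (iii), if $v\geq 0$, $v\neq 0$, satisfies $Mv=\lambda v$ for some $\lambda$, then $Nv=(\lambda+\alpha)v$ exhibits $v$ as a nonnegative eigenvector of $N$, which forces $v$ to be a scalar multiple of $w$ and $\lambda+\alpha=\rho(N)$, i.e.\ $\lambda=\mu(M)$ and $v$ is a scalar multiple of $v_F$.

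The points requiring care — and the ones I expect to be the main, though mild, obstacles — are bookkeeping rather than conceptual: verifying that the shift truly preserves irreducibility (adding $\alpha I$ alters only the diagonal, hence only self-loops in the associated digraph, which does not affect strong connectivity) and that it preserves the Jordan structure so that ``algebraically simple'' transfers verbatim; and using the inequality chain $\rho(N)\geq Re(\lambda)+\alpha$ in the correct direction so that $\mu(M)$ is pinned to the Perron root rather than merely bounded by it. Everything else is a direct quotation of the nonnegative-matrix Perron--Frobenius theorem.
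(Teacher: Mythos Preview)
Your argument is correct and is the standard reduction of the Metzler case to the classical Perron--Frobenius theorem via a diagonal shift; the bookkeeping points you flag (irreducibility under the shift, preservation of Jordan structure, and the two-sided inequality pinning $\mu(M)$ to $\rho(N)-\alpha$) are all handled properly. Note, however, that the paper does not supply its own proof of this lemma: it is quoted directly from \cite[Theorem 17]{positive} and used as a black box, so there is nothing in the paper to compare your approach against.
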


The equilibria of the Modified DeGroot-Friedkin model are characterized in the following theorem.

\begin{theorem}\label{thm3}
\emph{\textbf{(Equilibria)} Suppose that $n\geq 3$ and all $n$ individuals adhere to the Modified DeGroot-Friedkin model defined by (\ref{onestep_matrix}). Suppose that the relative interaction matrix $C$ is doubly stochastic and irreducible with diagonal entries being zero, then besides the trivial equilibrium points $e_1,\dots,e_n$, there exists a unique nontrivial equilibrium $x^*=\frac{1}{n}\mathbf{1}$}.
\end{theorem}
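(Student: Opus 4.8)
The plan is to first record existence and then prove uniqueness. Existence of $x^* = \frac{1}{n}\mathbf{1}$ is immediate: since $C$ is doubly stochastic, $C'$ is stochastic, so $C'\mathbf{1} = \mathbf{1}$, and substituting $x = \frac{1}{n}\mathbf{1}$ (hence $X = \text{diag}(x) = \frac{1}{n}I$) into the right-hand side of (\ref{onestep_matrix}) gives $\frac{1}{n}\mathbf{1} + \frac{1}{n^2}\mathbf{1} - \frac{1}{n^2}\mathbf{1} = \frac{1}{n}\mathbf{1}$. Together with Lemma \ref{lemma2}, this shows $e_1,\dots,e_n,\frac{1}{n}\mathbf{1}$ are all equilibria; the content of the theorem is that a nontrivial equilibrium $x\in\Delta$ (i.e. $x\notin\{e_1,\dots,e_n\}$) must equal $\frac{1}{n}\mathbf{1}$. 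The first step toward this is to argue that such an $x$ is strictly positive: if $x$ had $m\ge 1$ zero entries, then (since a point of $\Delta$ with $n-1$ zeros is some $e_i$) we have $x\in\Delta\setminus\{e_1,\dots,e_n\}$ with $1\le m\le n-2$, and Lemma \ref{lemma 3}(iii) would give $x(\tau)>0$ for some $\tau\le m$; but $x$ is a fixed point, so $x(\tau)=x$ still has $m\ge 1$ zeros, a contradiction. Hence $x>0$, and because $\mathbf{1}'x=1$ with $n\ge 3$, every component lies in $(0,1)$.

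The key reduction is algebraic. Writing $X=\text{diag}(x)$ and $b := (I-X)x$, so that $b_i = x_i(1-x_i)>0$, the equilibrium equation (\ref{onestep_matrix}) rearranges to $(I-X)x = C'(I-X)x$, that is, $C'b = b$. Thus $b$ is a strictly positive eigenvector of $C'$ for the eigenvalue $1$. Since $C$ doubly stochastic implies $C'$ is an irreducible stochastic matrix (hence an irreducible Metzler matrix) with $\mu(C')=1$ and Perron eigenvector $\mathbf{1}$, Lemma \ref{lemma_pf}(iii) forces $b$ to be a scalar multiple of $\mathbf{1}$. Therefore $x_i(1-x_i) = c$ for all $i$ and some constant $c>0$, so each $x_i$ is a root of $z^2 - z + c = 0$; consequently the components of $x$ take at most two distinct values, and when there are exactly two they are $r$ and $1-r$ for some $r\in(0,\tfrac12)$.

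It then remains to eliminate the genuinely two-valued configuration by a finite check. If all components of $x$ are equal, $\mathbf{1}'x=1$ gives $x=\frac{1}{n}\mathbf{1}$ and we are done. Otherwise suppose $k$ components equal $r\in(0,\tfrac12)$ and $n-k$ equal $1-r$, with $1\le k\le n-1$. Imposing $\mathbf{1}'x=1$ yields $r(2k-n) = k-(n-1)$. A short case analysis on the sign of $2k-n$ — using $n\ge 3$, that $k$ is an integer, and $r\in(0,\tfrac12)$ — rules this out: if $2k=n$ one needs $k=n-1$, forcing $n=2$; if $2k>n$ the left side is positive while the right side is $\le 0$; and if $2k<n$ one gets $r = \frac{(n-1)-k}{\,n-2k\,}$, and $r<\tfrac12$ would require $n<2$. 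Hence no two-valued equilibrium exists, so $x=\frac{1}{n}\mathbf{1}$ is the unique nontrivial equilibrium.

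I expect the conceptual crux to be the reduction $C'b=b$ with $b_i = x_i(1-x_i)$ followed by the Perron–Frobenius step (Lemma \ref{lemma_pf}), which collapses the eigenvector condition to a single constraint $x_i(1-x_i)=c$; the only places needing genuine care are justifying strict positivity of the equilibrium via Lemma \ref{lemma 3}(iii), and the final combinatorial elimination of the two-valued profile.
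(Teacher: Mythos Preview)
Your proof is correct and follows essentially the same route as the paper: rewrite the fixed-point equation as $(C'-I)b=0$ with $b_i=x_i(1-x_i)$, invoke Perron--Frobenius (Lemma~\ref{lemma_pf}) to force $b$ to be a scalar multiple of $\mathbf{1}$, and then eliminate the two-valued configurations via the sum constraint $\mathbf{1}'x=1$. Your observation that the two quadratic roots are $r$ and $1-r$ makes the final case analysis tidier than the paper's discriminant-based version, and your preliminary positivity argument via Lemma~\ref{lemma 3}(iii) is sound but not strictly necessary, since Lemma~\ref{lemma_pf}(iii) only needs $b\ge 0$, $b\neq 0$, which already follows from $x\in\Delta\setminus\{e_1,\dots,e_n\}$.
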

\begin{proof}
We first establish the fact that $x^*=\frac{1}{n}\mathbf{1}$ is an equilibrium for the Modified DeGroot-Friedkin model. Substituting $x(s)=\frac{1}{n}\mathbf{1}$ into (\ref{onestep_matrix}), the right hand side becomes
\begin{equation*}
\begin{split}
  &\quad C'x(s)+X(s)x(s)-C'X(s)x(s)\\
     &=C'\left(\frac{1}{n}\mathbf{1}\right)+\frac{1}{n}I\left(\frac{1}{n}\mathbf{1}\right)-C'\frac{1}{n}I\left(\frac{1}{n}\mathbf{1}\right)\\
     & =\frac{1}{n}\mathbf{1}+\frac{1}{n^2}\mathbf{1}-\frac{1}{n^2}\mathbf{1}=\frac{1}{n}\mathbf{1}
\end{split}
\end{equation*}
The second equality is due to the fact that $C$ is doubly stochastic, i.e., $C'\left(\frac{1}{n}\mathbf{1}\right)=\frac{1}{n}\mathbf{1}$. Therefore, $x=\frac{1}{n}\mathbf{1}$ is an equilibrium for the Modified DeGroot-Friedkin model. We now want to show that $\frac{1}{n}\mathbf{1}$ is unique besides the trivial equilibria. Let $x^*$ be an equilibrium and from (\ref{onestep_matrix}), $x^*$ must satisfy
\begin{equation*}
x^*=C'x^*+X^*x^*-C'X^*x^*
\end{equation*}
or
\begin{equation*}
(C'-I)(x^*-X^*x^*)=0
\end{equation*}
Note that the $i$th entry of vector $x^*-X^*x^*$ is $(x^*-X^*x^*)_i=x^*_i-(x^{*}_i)^2$. As discussed before, since the initial condition $x(0)\in\Delta$ and from Lemma \ref{lemma1}, $x_i(s)$ is necessarily always in [0,1]. As a consequence, $x^*-X^*x^*\geq0$. To apply \emph{(iii)} of Lemma \ref{lemma_pf}, we still need to show that $x^*-X^*x^*\neq0$. Suppose that $x^*-X^*x^*=0$, which means $x^*_i-(x^*_i)^2=0$, for all $i\in[n]$, then $x_i^*=0$ or $x_i^*=1$, for all $i\in[n]$. We know $\sum_{i=1}^nx_i(s)=1$, so $x^*-X^*x^*=0$ means $x^*\in \{e_1,\dots,e_n\}$, which are the trivial equilibria for the system. Therefore, for a nontrivial equilibrium, $x^*-X^*x^*\geq0$ and $x^*-X^*x^*\neq0$. Since $C'-I$ is a Metzler matrix and $C$ is assumed to be irreducible, from \emph{(iii)} of Lemma \ref{lemma_pf}, we conclude that $x^*-X^*x^*$ is unique up to some scalar multiple. We already know that $\frac{1}{n}\mathbf{1}$ is an equilibrium of the system; hence
\begin{equation*}
x^*-X^*x^*=a\left(\frac{1}{n}\mathbf{1}-\frac{1}{n^2}\mathbf{1}\right) \quad\text{for some}\:\:\:a>0
\end{equation*}
or for each component
\begin{equation} \label{theorem2_component}
x^*_i-(x^*_i)^2=a\frac{n-1}{n^2} \quad\text{for all $i\in[n]$ and for some $a>0$}
\end{equation}
Here,
$a>0$ is a result of the fact that $x^*_i\in[0,1]$. Although we have shown $x^*-X^*x^*$ is unique up to some scalar multiple, (\ref{theorem2_component}) is a quadratic equation and $a$ can be an arbitrary positive number. There may be an infinite number of choices for $x^*$ that satisfies (\ref{theorem2_component}). Now, we want to show that under the condition $\sum_{i=1}^nx^*_i=1$, there is only one choice.

For $a$ such that $1-\frac{4a(n-1)}{n^2}\geq 0$ , the quadratic equation (\ref{theorem2_component}) admits two solutions
\begin{equation*}
x_{i1}^*=\frac{1+\sqrt{1-\frac{4a(n-1)}{n^2}}}{2}\quad x_{i2}^*=\frac{1-\sqrt{1-\frac{4a(n-1)}{n^2}}}{2}
\end{equation*}
Note that both solutions are positive.

\emph{(i)} Consider first the case when $x^*_i=x^*_{i2}$, for all $i\in[n]$.
\begin{equation*}
\begin{split}
\sum_{i=1}^nx^*_i =\frac{n-n\sqrt{1-\frac{4a(n-1)}{n^2}}}{2}=1
\end{split}
\end{equation*}
which is equivalent to $a=1$. When $a=1$, the above corresponds to the case when $x^*=\frac{1}{n}\mathbf{1}$.

\emph{(ii)} Now consider the second case, that is $x^*_j=x^*_{j1}$ for some $j$ and $x^*_i=x^*_{i2}$, for all $i\neq j$, $i\in[n]$. Then, we need
\begin{equation*}
\sum_{i=1}^nx^*_i= \frac{n-(n-2)\sqrt{1-\frac{4a(n-1)}{n^2}}}{2}=1
\end{equation*}
which is the same as
\begin{equation*}
n-(n-2)\sqrt{1-\frac{4a(n-1)}{n^2}}=2
\end{equation*}
However, $a>0$ implies $\sqrt{1-\frac{4a(n-1)}{n^2}}<1$, so
\begin{equation} \label{theorem_condition}
n-(n-2)\sqrt{1-\frac{4a(n-1)}{n^2}}>2
\end{equation} and hence this case is not possible.

\emph{(iii)} Now one can show that for the case when $x^*_j=x^*_{j1}$, $x^*_m=x^*_{m1}$ for some $j\neq m$ and $x^*_i=x^*_{i2}$, for all $i\neq j,\:i\neq m$, and $i\in[n]$, we need
\begin{equation*}
n-(n-4)\sqrt{1-\frac{4a(n-1)}{n^2}}=2
\end{equation*}
which is not possible by (\ref{theorem_condition}). Applying the same idea, in general, one can see that when there are $k$ entries of $x^*$ that are equal to $x^*_{i1}$ and the remaining $n-k$ entries are equal to $x^*_{i2}$, we need
\begin{equation*}
n-(n-2k)\sqrt{1-\frac{4a(n-1)}{n^2}}=2
\end{equation*}
which is impossible for $k\geq 1$ by (\ref{theorem_condition}).

Therefore, the only choice under the condition $\sum_{i=1}^n=1$ is the first case, and hence $x^*=\frac{1}{n}\mathbf{1}$ is the only nontrivial equilibrium.
\end{proof}

\textbf{Remark:} The above theorem only considers the case when $n\geq 3$. The case when $n=2$ is interesting, and requires a separate treatment. While $x_1^*=x^*_{i1}$ and $x_2^*=x^*_{i2}$ is still an equilibrium which corresponds to $x^*=(0.5,0.5)'$, actually, for $x_1^*=x^*_{i1}$ and $x_2^*=x^*_{i2}$, $x_1^*+x_2^*=1$ for any $a>0$. Therefore, as long as we choose any $a>0$ that makes the quadratic equation (\ref{theorem2_component}) have solutions in $[0,1]$, $x^*=(x_{i1}^*,x_{i2}^*)'$ is always an equilibrium. The system actually has an infinite number of nontrivial equilibria for the $n=2$ case. Since $n=2$ is too small for almost all social networks, we will focus our attention on cases when $n\geq 3$.
\hfill$\qed$

Next, we will show that if the initial conditions are not the trivial equilibria, the self-confidence vector $x(s)$ converges to the unique nontrivial equilibrium $x^*$.
\begin{theorem} \label{the4}
\emph{\textbf{(Asymptotically stable)} Suppose that $n\geq 3$ and all $n$ individuals adhere to the Modified DeGroot-Friedkin model defined by (\ref{onestep_matrix}). Suppose that the relative interaction matrix $C$ is doubly stochastic and irreducible with diagonal entries being zero. Then, for any initial condition $x(0)\in \Delta \setminus \{e_1,\dots,e_n\}$, $x(s)$ asymptotically converges to the unique nontrivial equilibrium $x^*=\frac{1}{n}\mathbf{1}$ as $s\rightarrow\infty$.}
\end{theorem}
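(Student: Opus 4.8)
I would prove this by producing two monotone scalars along the flow and then pinning down the limit by an invariance argument. First, by Lemma~\ref{lemma 3}(iii) I may assume $x(0)>0$ (the hypothesis $x(0)\in\Delta\setminus\{e_1,\dots,e_n\}$ guarantees $x(\tau)>0$ after finitely many steps), and by Lemma~\ref{lemma1} the trajectory stays in $\Delta$, so $x_i(s)\in[0,1]$ and $\mathbf{1}'x(s)=1$ throughout. Writing the update (\ref{onestep_component}) as $(\mathcal{F}(x))_i=x_i^2+\sum_{j\neq i}c_{ji}\,\phi(x_j)$ with $\phi(t):=t(1-t)$ (and $\sum_{j\neq i}c_{ji}=1$ since $C$ is doubly stochastic with zero diagonal), the key lemma I would establish is that for every $x\in\Delta$ and every index $i$,
\[
\min_k x_k\ \le\ (\mathcal{F}(x))_i\ \le\ \max_k x_k,
\]
so that $M(s):=\max_i x_i(s)$ is nonincreasing and $m(s):=\min_i x_i(s)$ is nondecreasing.

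The upper bound I would get by a short case split on whether $x_i$ and $M:=\max_k x_k$ exceed $\tfrac12$. Since $\mathbf{1}'x=1$, every $j\neq i$ has $x_j\le 1-x_i$, and $\phi$ is nondecreasing on $[0,\tfrac12]$ with $\phi\le\tfrac14$: if $x_i\ge\tfrac12$ then $\phi(x_j)\le\phi(1-x_i)=(1-x_i)x_i$, so $(\mathcal{F}(x))_i\le x_i^2+(1-x_i)x_i=x_i\le M$; if $x_i<\tfrac12\le M$ then $(\mathcal{F}(x))_i<\tfrac14+\tfrac14\le M$; and if $x_i<\tfrac12$ and $M<\tfrac12$ then $\phi(x_j)\le\phi(M)$, so $(\mathcal{F}(x))_i\le x_i^2+\phi(M)\le M^2+\phi(M)=M$. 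The lower bound follows because $\phi(x_j)\ge\phi(m)$ for every $j$, where $m:=\min_k x_k$ (if $x_j\le\tfrac12$ use $x_j\ge m$; if $x_j>\tfrac12$ use $\phi(x_j)=\phi(1-x_j)$ and $1-x_j=\sum_{k\neq j}x_k\ge m$), giving $(\mathcal{F}(x))_i\ge x_i^2+\phi(m)\ge m^2+\phi(m)=m$.

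To finish I would run an $\omega$-limit argument. From monotonicity, $M(s)\downarrow M^{*}$ and $m(s)\uparrow m^{*}$ with $0<m(0)\le m^{*}\le\tfrac1n\le M^{*}$, and the orbit, lying in the compact set $\Delta$, has a nonempty compact $\mathcal{F}$-invariant $\omega$-limit set $\Omega$ on which $\max\equiv M^{*}$ and $\min\equiv m^{*}>0$. Reading off the equality cases of the estimates on $\Omega$: if $\bar x\in\Omega$ and $(\mathcal{F}(\bar x))_i=M^{*}$ then $\bar x_i=M^{*}$; the case $M^{*}\ge\tfrac12$ cannot occur on $\Omega$, since it would force some $j$ with $c_{ji}>0$ to have $\bar x_j=1-M^{*}$, hence $\bar x_i+\bar x_j=1=\mathbf{1}'\bar x$ and the remaining $n-2\ge 1$ entries zero, contradicting $\bar x>0$; and for $M^{*}<\tfrac12$, $(\mathcal{F}(\bar x))_i=M^{*}$ forces $\bar x_j=M^{*}$ for all $j$ with $c_{ji}>0$. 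Iterating $\mathcal{F}$ along an orbit contained in $\Omega$, the argmax set $\{i:x_i=M^{*}\}$ is nonincreasing and always nonempty, so it stabilizes to a nonempty set closed under the neighbor relation of the strongly connected matrix $C'$, which must therefore be all of $[n]$; hence $M^{*}=\tfrac1n$. The symmetric argument for $m^{*}$ (where the obstruction $m^{*}\ge\tfrac12$ is impossible since $m^{*}\le\tfrac1n<\tfrac12$) gives $m^{*}=\tfrac1n$, and then $\|x(s)-\tfrac1n\mathbf{1}\|_\infty\le\max\big(M(s)-\tfrac1n,\ \tfrac1n-m(s)\big)\to 0$.

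The step I expect to be the main obstacle is the monotonicity lemma — in particular the upper bound, whose case analysis on the positions of $x_i$ and $\max_k x_k$ relative to $\tfrac12$ is the heart of the matter — together with the equality bookkeeping inside $\Omega$; the remainder is a routine invariance-principle argument. As an alternative I would mention the Lyapunov function $V(x)=\sum_i x_i^2=\|x-\tfrac1n\mathbf{1}\|^2+\tfrac1n$: from $\mathcal{F}(x)=(x\circ x)+C'(x-x\circ x)$ and $\mathbf{1}'C'=\mathbf{1}'$, $C'\mathbf{1}=\mathbf{1}$ one obtains $V(\mathcal{F}(x))-V(x)=2a'(C'-I)b+\big(\|C'b\|^2-\|b\|^2\big)$ with $a=x\circ x$, $b=x-a\ge0$; the second term is $\le0$ by double stochasticity and the first is $\le0$ by Birkhoff's theorem, reducing per cycle of a permutation in the decomposition of $C'$ to $\sum_m y_m^2\big(\phi(y_m)-\phi(y_{m+1})\big)\ge0$, which follows by a ``delete the largest vertex of the cycle'' induction using $y_m+y_{m+1}\le1$. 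This gives a clean monotone Lyapunov function, but its zero set is less transparent, so I would prefer the $\max/\min$ route.
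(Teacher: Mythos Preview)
Your argument is correct, and it follows the same high-level Lyapunov idea as the paper (the scalar $M(s)=\max_i x_i(s)$, shifted by $\tfrac1n$), but the two substantive steps are carried out quite differently.

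For the non-expansiveness of $\max$ and $\min$ (the paper's Lemma~\ref{lemma4}), the paper introduces an auxiliary point $\bar x_i(s)\in[v(i)_{\min},v(i)_{\max}]$ with $\phi(\bar x_i)=\sum_{j\neq i}c_{ji}\phi(x_j)$, arguing its existence via the shape of $\phi$ on $[0,1]$ (two figures and a case split on $v(i)_{\max}\lessgtr\tfrac12$), and then reduces to the two-variable identity $y_1^{\mathrm{new}}=y_1^2+y_2-y_2^2$. Your three-case bound on $(\mathcal F(x))_i$ using $\phi\le\tfrac14$ and the monotonicity of $\phi$ on $[0,\tfrac12]$ reaches the same conclusion more directly and without the auxiliary $\bar x_i$.

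For the strict-descent step, the paper proves a quantitative statement (Lemma~\ref{lemma_decrease}): if $x(s)>0$ and $x(s)\neq\tfrac1n\mathbf 1$ then $x(s+n-1)_{\max}<x(s)_{\max}$. This is done by an exhaustive scenario analysis on the cardinality of the argmax set $\mathcal M$ and on which maximal indices satisfy $x_i=\bar x_i$, tracking how the structure of $C$ forces the number of persisting maxima to drop. You replace this by an $\omega$-limit/LaSalle argument: on the invariant set $\Omega$ the value of $\max$ is frozen at $M^{*}$, the equality case of your upper bound shows the argmax set is nonincreasing along an orbit in $\Omega$ and, once stabilized, absorbs all $j$ with $c_{ji}>0$; irreducibility then forces it to be all of $[n]$, hence $M^{*}=\tfrac1n$. (Incidentally, once $M^{*}=\tfrac1n$ the constraint $\mathbf 1'x=1$ already gives $m^{*}=\tfrac1n$, so your separate treatment of $m^{*}$ is not needed.) The trade-off is clear: the paper's route yields an explicit ``at most $n-1$ steps'' descent bound that could feed a rate estimate, while your invariance argument is shorter and avoids the case enumeration entirely. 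Your sketched alternative via $V(x)=\sum_i x_i^2$ and Birkhoff's theorem is a genuinely different Lyapunov function from anything in the paper; it is sound as outlined (the cycle inequality $\sum_m y_m^2(\phi(y_m)-\phi(y_{m+1}))\ge 0$ under $y_m+y_{m+1}\le 1$ does go through), but as you note the equality set is less transparent, so the $\max/\min$ route is the cleaner choice.
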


To prove Theorem \ref{the4}, we first state the following lemma regarding the non-expansiveness of the minimal and maximal entries in $x(s)$.
\begin{lemma} \label{lemma4}
\emph{Suppose that $n\geq 3$ and all $n$ individuals adhere to the Modified DeGroot-Friedkin model defined by (\ref{onestep_matrix}). Assume that the relative interaction matrix $C$ is doubly stochastic and irreducible with diagonal entries being zero, and that $x(0)\in \Delta_n \setminus \{e_1,\dots,e_n\}$. Let $x(s)_{min}=\min_{0\leq i\leq n} x_i(s)$ and $x(s)_{max}=\max_{0\leq i\leq n} x_i(s)$. Then, for every update, $x(s)_{min}$ and $x(s)_{max}$ are not expanding, i.e., $x(s+1)_{min}\geq x(s)_{min}$ and $x(s+1)_{max}\leq x(s)_{max}$, for all $s\geq 0$.}
\end{lemma}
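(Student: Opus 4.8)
The plan is to argue entirely at the level of the scalar update (\ref{onestep_component}). Since $c_{jj}=0$, that update reads $x_i(s+1)=x_i(s)^2+\sum_{j=1}^{n}c_{ji}\,x_j(s)\bigl(1-x_j(s)\bigr)$, and because $C$ is doubly stochastic its columns sum to one, so $\sum_{j=1}^{n}c_{ji}=1$ for each $i$ and the sum on the right is a convex combination of the numbers $\phi\bigl(x_j(s)\bigr)$, where $\phi(t):=t(1-t)$. Writing $m=x(s)_{min}$ and $M=x(s)_{max}$, I would reduce the whole lemma to the claim that $m(1-m)\le\phi\bigl(x_j(s)\bigr)\le M(1-M)$ for every $j$: granting this together with $m^2\le x_i(s)^2\le M^2$, one gets $m=m^2+m(1-m)\le x_i(s+1)\le M^2+M(1-M)=M$ for every $i$, which is precisely $x(s+1)_{min}\ge m$ and $x(s+1)_{max}\le M$.

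So the work is the two-sided bound on $\phi\bigl(x_j(s)\bigr)$, and the key ingredient is that, by Lemma \ref{lemma1} and the remark following it, $x(s)\in\Delta$ — its coordinates are nonnegative and sum to one. This immediately gives $m\le 1/n\le 1/2$ and $M+m\le 1$, so every coordinate lies in $[m,1-m]$. Since that interval is symmetric about $1/2$ and $\phi$ is concave, $\phi$ attains its minimum over $[m,1-m]$ at the endpoints, both equal to $m(1-m)$; hence $\phi\bigl(x_j(s)\bigr)\ge m(1-m)$ for all $j$, which settles the lower bound.

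The upper bound is the more delicate half and is the step I expect to be the main obstacle, since the naive estimate $\phi(t)\le\phi(1/2)=1/4$ is far too weak. If $M\le 1/2$ it is immediate, as $\phi$ is increasing on $[0,1/2]\supseteq[0,M]$. If $M>1/2$, the point is that at most one coordinate can exceed $1/2$: fixing an index $i^*$ with $x_{i^*}(s)=M$, for every $j\ne i^*$ the simplex constraint gives $x_j(s)\le 1-x_{i^*}(s)=1-M<1/2$, so $\phi\bigl(x_j(s)\bigr)\le\phi(1-M)=M(1-M)$, while $\phi\bigl(x_{i^*}(s)\bigr)=M(1-M)$ exactly; thus $\phi\bigl(x_j(s)\bigr)\le M(1-M)$ for all $j$. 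With both bounds established, the computation at the end of the first paragraph finishes both statements simultaneously; the degenerate all-equal case $M=m$, in which $x(s)$ is already the equilibrium $\frac{1}{n}\mathbf{1}$ of Theorem \ref{thm3}, is subsumed.
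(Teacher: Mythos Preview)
Your argument is correct and cleaner than the paper's. Both proofs hinge on the same two observations: the sum $\sum_j c_{ji}\,\phi(x_j(s))$ is a convex combination (by double stochasticity), and the simplex constraint forces all coordinates into an interval on which $\phi(t)=t(1-t)$ can be controlled, requiring a case split on whether the maximum exceeds $1/2$. The paper, however, takes a detour: it first proves a two-variable fact that $y_1^{new}=y_1^2+y_2-y_2^2$ lies between $\min(y_1,y_2)$ and $\max(y_1,y_2)$ whenever $y_1,y_2\ge 0$ and $y_1+y_2\le 1$; then, via a graphical argument with two figures, it shows that the convex combination equals $\phi(\bar{x}_i(s))$ for some auxiliary $\bar{x}_i(s)$ lying in the range of the other coordinates, thereby reducing the $n$-variable update to the two-variable one. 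Your direct two-sided bound on $\phi$ bypasses both the auxiliary $\bar{x}_i$ and the graphical case analysis. The paper's formulation via $\bar{x}_i(s)$ does pay off later, though: the proof of Lemma~\ref{lemma_decrease} (strict decrease of the maximum in finitely many steps) is built on that representation, so the extra structure is not wasted in the paper's overall architecture.
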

\begin{proof}
First, let us consider the following equation:
\begin{equation}\label{lemma4_equation}
\begin{split}
& y_1^{new}=y_1^2+y_2-y_2^2\quad\\
 s.t.\quad & y_1\geq 0,\quad y_2\geq 0,\quad y_1+y_2\leq 1
\end{split}
\end{equation}
Then, we have
\begin{equation*}
\begin{split}
 y_1^{new}-y_2&=y_1^2-y_2^2\\
y_1^{new}-y_1&=(y_2-y_1)-(y_2^2-y_1^2)\\
&=(y_2-y_1)(1-y_2-y_1)
\end{split}
\end{equation*}
Therefore, if $y_1\leq y_2$, we have $y_1\leq y_1^{new}\leq y_2$; if $y_2\leq y_1$, we have $y_2\leq y_1^{new}\leq y_1$. So for the equation (\ref{lemma4_equation}), we obtain the following result:
\begin{equation} \label{lemma4_equationresult}
min(y_1,y_2)\leq y_1^{new}\leq max(y_1,y_2)
\end{equation}

Now, let us consider the component-wise Modified DeGroot-Friedkin model (\ref{onestep_component}).
\begin{equation} \label{lemma4_component}
x_i(s+1)=x_i^2(s)+\sum_{j=1,j\neq i}^n(x_j(s)-x_j^2(s))c_{ji}
\end{equation}
Note that since the diagonal entries of $C$ are zero, so for the second term in (\ref{lemma4_component}), taking summation from $j=1$ to $n$ is the same as taking summation from $j=1,j\neq i$ to $n$.

Since $C$ is doubly stochastic with diagonal entries being zero, $\sum_{j=1,j\neq i}^n c_{ji}=1$. Therefore, $\sum_{j=1,j\neq i}^n(x_j(s)-x_j^2(s))c_{ji}$ is indeed the convex combination of $(x_j(s)-x_j^2(s))$ for $j\neq i$, $j\in[n]$. Hence,
\begin{equation}\label{lemma4_convexcondition}
\begin{split}
 z(i)_{min}  \leq\sum_{j=1,j\neq i}^n(x_j(s)-x_j^2(s))c_{ji} \leq z(i)_{max}
\end{split}
\end{equation}
where $z(i)_{min}=min_{j\neq i,0\leq j\leq n}(x_j(s)-x^2_j(s))$ and $z(i)_{max}=max_{j\neq i,0\leq j\leq n}(x_j(s)-x^2_j(s))$.

Now, let $v(i)_{min}=min_{j\neq i,0\leq j\leq n}\:x_j(s)$ and $v(i)_{max}=max_{j\neq i,0\leq j\leq n}\:x_j(s)$. $v(i)_{min}$ is the minimal value among all $x_j(s)$ excluding $x_i(s)$ and correspondingly $v(i)_{max}$ is the maximal value excluding $x_i(s)$.

We claim that there exists $\bar{x}_i(s)\in[v(i)_{min},v(i)_{max}]$ such that
\begin{equation}\label{lemma4_claim}
\bar{x}_i(s)-\bar{x}_i^2(s)=\sum_{j=1,j\neq i}^n(x_j(s)-x_j^2(s))c_{ji}
\end{equation}
A graphical representation will help understand the claim.
\setlength{\intextsep}{-0.01cm}
\begin{figure}[htbp]
\centering
\includegraphics[width=0.35\textwidth]{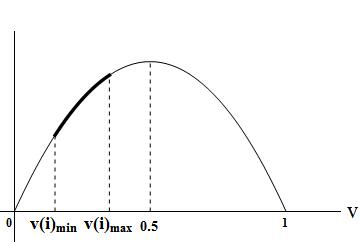}
\caption{Function $f(v)=v-v^2$, for $v(i)_{max}< 0.5$}
\setlength{\belowcaptionskip}{0.03cm}
\includegraphics[width=0.35\textwidth]{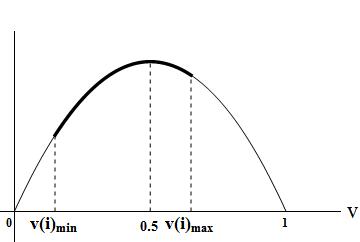}
\caption{Function $f(v)=v-v^2$, for $v(i)_{max}\geq 0.5$}
\end{figure}
Figures $1$ and $2$ are the plots of function $f(v)=v-v^2$. Since $\sum_{i=1}^n x_i=1$, then necessarily $v(i)_{min}\leq 0.5$ and $v(i)_{max}\leq 1-x(i)_{min}$. Function $f(v)$ is strictly increasing for $[0,0.5]$, so $z(i)_{min}=v(i)_{min}-v(i)^2_{min}$.

\emph{Case 1.} $v(i)_{max}< 0.5$.
In this case, $z(i)_{max}=v(i)_{max}-v(i)^2_{max}$. From (\ref{lemma4_convexcondition}), $\sum_{j=1,j\neq i}^n(x_j(s)-x_j^2(s))c_{ji}$ must lie in the bold curve of Figure $1$, so there must exist $\bar{x}_i(s)\in[v(i)_{min},v(i)_{max}]$ such that (\ref{lemma4_claim}) holds.

\emph{Case 2.} $v(i)_{max}\geq 0.5$. If $z(i)_{max}= v(i)_{max}-v(i)^2_{max}$, we apply the same reasong in \emph{Case 1}. If $z(i)_{max}\neq v(i)_{max}-v(i)^2_{max}$, then there exists $x_k<v(i)_{max}$ such that $z(i)_{max}= x_k(s)-x_k^2(s)$. So from (\ref{lemma4_convexcondition}), there must exist $\bar{x}_i(s)\in[v(i)_{min}, x_k(s)]$ such that (\ref{lemma4_claim}) holds. Since $x_k<v(i)_{max}$, from Figure 2, $\sum_{j=1,j\neq i}^n(x_j(s)-x_j^2(s))c_{ji}$ must lie in the bold curve of Figure $2$. Hence, there exists $\bar{x}_i(s)\in[v(i)_{min},v(i)_{max}]$ such that (\ref{lemma4_claim}) holds. This completes the proof of our claim.

Finally, we can write (\ref{lemma4_component}) as
\begin{equation} \label{lemma4_bar}
x_i(s+1)=x_i^2(s)+\bar{x}_i(s)-\bar{x}_i(s)^2
\end{equation}
Further, since $\bar{x}_i(s)<v(i)_{max}$ and $\sum_{i=1}^nx_i(s)=1$, then $x_i(s)+\bar{x}_i(s)\leq 1$ and $x_i(s)\geq 0$, $\bar{x}_i(s)\geq 0$. Comparing (\ref{lemma4_bar}) with (\ref{lemma4_equation}), they have the same structures and conditions. Therefore, from (\ref{lemma4_equationresult}), we conclude:
\begin{equation*}
min(x_i(s),\bar{x}_i(s))\leq x_i(s+1)\leq max(x_i(s),\bar{x}_i(s))
\end{equation*}
Since $\bar{x}_i(s)\in[v(i)_{min},v(i)_{max}]$, we finally get
\begin{equation} \label{lemma4_final}
x(s)_{min}\leq x_i(s+1)\leq x(s)_{max}
\end{equation}
where $x(s)_{min}=min_{1\leq k\leq n} x_k(s)$ and $x(s)_{max}=max_{1\leq k\leq n} x_k(s)$. For the above proof, it can be seen that $i$ can be arbitrarily chosen. Therefore, we conclude from (\ref{lemma4_final}) that
\begin{equation} \label{lemma4_final2}
x(s)_{min}\leq x_i(s+1)\leq x(s)_{max}\quad\text{for all $i\in[n]$}
\end{equation}
which is equivalent to
\begin{equation*}
x(s)_{min}\leq x(s+1)_{min}\quad\text{and}\quad x(s+1)_{max}\leq x(s)_{max}
\end{equation*}
\end{proof}

Based on the above lemma, we provide an even stronger result regarding the evolution of the maximum of $x(s)$.
\begin{lemma} \label{lemma_decrease}
\emph{Suppose that $n\geq 3$ and all $n$ individuals adhere to the Modified DeGroot-Friedkin model defined by (\ref{onestep_matrix}). Assume that the relative interaction matrix $C$ is doubly stochastic and irreducible with diagonal entries being zero. Let $x(s)_{max}=\max_{0\leq i\leq n} x_i(s)$. If $x(s)>0$ and $x(s)\neq \frac{1}{n}\mathbf{1}$, then $x(s)_{max}$ must decrease in at most $n-1$ updates, i.e., $x(s+n-1)_{max}<x(s)_{max}$.}
\end{lemma}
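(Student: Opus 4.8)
By Lemma~\ref{lemma4} the scalar sequence $x(t)_{max}$ is non-increasing, so it suffices to exclude the possibility that it stays equal to some constant $M$ on the whole block $t=s,s+1,\dots,s+n-1$; the plan is to assume this and derive a contradiction. The convenient form of the dynamics is obtained by setting $f(v)=v-v^2$ and using $c_{ii}=0$ together with column-stochasticity $\sum_{j\neq i}c_{ji}=1$, which rewrites (\ref{onestep_component}) as
\begin{equation*}
x_i(t+1)=x_i(t)+\sum_{j\neq i}c_{ji}\bigl(f(x_j(t))-f(x_i(t))\bigr).
\end{equation*}
First I would record the facts that come for free on the block: $x(t)>0$ for all $t\ge s$ (the one-step argument of Lemma~\ref{lemma 3}(i), run from time $s$), hence $M<1$; and $M>\tfrac1n$ with $x(t)\neq\tfrac1n\mathbf{1}$ throughout, since $x(t)=\tfrac1n\mathbf{1}$ would give $M=\tfrac1n$, and then $\mathbf{1}'x(s)=1$ with $x_i(s)\le M$ would force $x(s)=\tfrac1n\mathbf{1}$, contradicting the hypothesis. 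Write $\mathcal{M}(t)=\{i:x_i(t)=M\}$; it is nonempty for every $t$ on the block and a proper subset of $[n]$.

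\medskip

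\noindent The next move is a dichotomy on $M$. If $M>\tfrac12$, then $\mathbf{1}'x(s)=1$ forces a unique coordinate $i$ with $x_i(s)=M$, and since $n\ge3$ and $x(s)>0$ the remaining mass $1-M$ is split among at least two positive coordinates, so $x_j(s)<1-M<\tfrac12$ strictly for all $j\neq i$. Using the symmetry $f(1-v)=f(v)$ and monotonicity of $f$ on $[0,\tfrac12]$, we get $f(x_j(s))<f(M)$ for all $j\neq i$, so the displayed increment gives $x_i(s+1)-M=\sum_{j\neq i}c_{ji}(f(x_j(s))-f(M))<0$; and for $j\neq i$ the crude estimate $x_j(s+1)=x_j^2(s)+\sum_{k\neq j}c_{kj}f(x_k(s))\le(1-M)^2+M(1-M)=1-M<M$ finishes it. Hence $x(s+1)_{max}<M$ and this case is done immediately.

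\medskip

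\noindent The substantive case is $M\le\tfrac12$, where $f$ is strictly increasing on $[0,M]$, so $f(x_j(t))\le f(M)$ for every $j$ and every $t$ on the block. From the increment formula I would extract, for any step $t$ at which the maximum does not drop, two facts: (a) if $x_i(t)<M$ then $x_i(t+1)-x_i(t)\le f(M)-f(x_i(t))=(M-x_i(t))(1-M-x_i(t))<M-x_i(t)$, because the last factor lies in $(0,1)$ as $0<x_i(t)+M<1$, so $x_i(t+1)<M$; and (b) if $x_i(t)=M$, then $x_i(t+1)=M$ precisely when $f(x_j(t))=f(M)$, i.e. $x_j(t)=M$, for every $j$ with $c_{ji}>0$. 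Together these give
\begin{equation*}
\mathcal{M}(t+1)=\{\,i\in\mathcal{M}(t)\;:\;c_{ji}>0\Rightarrow j\in\mathcal{M}(t)\,\}\subseteq\mathcal{M}(t).
\end{equation*}
If this inclusion were an equality for some $t$ on the block, $\mathcal{M}(t)$ would be a nonempty set no edge of which (in the directed graph of the irreducible matrix $C'$) leaves it; strong connectedness then forces $\mathcal{M}(t)=[n]$, i.e. $x(t)=\tfrac1n\mathbf{1}$, which is impossible. Therefore $\mathcal{M}(s)\supsetneq\mathcal{M}(s+1)\supsetneq\cdots\supsetneq\mathcal{M}(s+n-1)$, a strictly decreasing chain of $n$ nonempty proper subsets of $[n]$, so their cardinalities would be $n$ distinct integers lying in $\{1,\dots,n-1\}$ — impossible by counting. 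This contradiction yields $x(s+n-1)_{max}<x(s)_{max}$.

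\medskip

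\noindent The point requiring the most care — and the reason the hypotheses $n\ge3$, $x(s)>0$ and $x(s)\neq\tfrac1n\mathbf{1}$ are all invoked — is the non-monotonicity of $f$ on $[0,1]$: once the running maximum exceeds $\tfrac12$, a sub-maximal coordinate can carry a larger $f$-value than the maximal one, so the ``set of maximizers only shrinks'' mechanism of the $M\le\tfrac12$ case fails and one has to fall back on the simplex constraint to force an immediate strict decrease. Checking that the inequalities in (a)--(b) and in the $M>\tfrac12$ estimates are strict rather than merely weak — which is exactly where $n\ge3$ and the strict positivity of $x$ enter — is the main thing to do carefully.
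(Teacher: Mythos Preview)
Your argument is correct and is considerably cleaner than the paper's. The paper proceeds by an exhaustive case analysis on the cardinality $|\mathcal{M}|$ of the maximizer set (Scenarios~1 through~$n-1$), and within each scenario it introduces the auxiliary quantity $\bar{x}_i(s)$ from Lemma~\ref{lemma4}, the set $\mathcal{Q}=\{i:x_i(s)=\bar{x}_i(s)\}$, and then further sub-cases according to which maximizers lie in $\mathcal{Q}$; each sub-case is dispatched either by exhibiting a contradiction with the irreducibility of $C$ or by tracing the update explicitly for a few steps. You replace all of this with two moves: the increment identity $x_i(t+1)-x_i(t)=\sum_{j\neq i}c_{ji}\bigl(f(x_j(t))-f(x_i(t))\bigr)$, and the dichotomy on $M$. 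When $M>\tfrac12$ the simplex constraint forces a one-step drop directly; when $M\le\tfrac12$ you are in the monotone regime of $f$, so facts (a)--(b) give $\mathcal{M}(t+1)\subsetneq\mathcal{M}(t)$ unless $\mathcal{M}(t)$ has no incoming edges from its complement, which irreducibility forbids for a proper nonempty set. The pigeonhole finish then delivers the bound $n-1$ without any inductive bookkeeping. What your route buys is transparency and a uniform mechanism (the shrinking-maximizer-set argument familiar from linear averaging/consensus proofs); what the paper's route buys is nothing extra beyond the same bound, since its worst case (Scenario~3.C, Condition~2, and its generalization in Scenario~4) recovers exactly the same $|\mathcal{M}(s)|\le n-1$ steps that your chain-length count gives. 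One small cosmetic point: in your $M>\tfrac12$ estimate for $j\neq i$ you write $x_j(s+1)\le(1-M)^2+M(1-M)$, but in fact $x_j^2(s)<(1-M)^2$ strictly, so the inequality is strict throughout; this does not affect the conclusion since $1-M<M$ already.
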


\begin{proof}
First, let us consider a variation of (\ref{lemma4_equation}) in the proof of Lemma \ref{lemma4},
\begin{equation}\label{lemma_decrease_equation}
\begin{split}
& y_1^{new}=y_1^2+y_2-y_2^2\quad\\
 s.t.\quad & y_1> 0,\quad y_2> 0,\quad, y_1\neq y_2,\quad y_1+y_2< 1
\end{split}
\end{equation}
Then, from the same reasoning, we have
\begin{equation} \label{lemma_decrease_equation_result}
min(y_1,y_2)<y_1^{new}<max(y_1,y_2)
\end{equation}
Recall that in the proof of Lemma \ref{lemma4}, we have shown from (\ref{lemma4_claim}) and (\ref{lemma4_bar}) that for any $i\in[n]$, there exists $\bar{x}_i(s)\in[v(i)_{min},v(i)_{max}]$ such that
\begin{equation}
\bar{x}_i(s)-\bar{x}_i^2(s)=\sum_{j=1,j\neq i}^n(x_j(s)-x_j^2(s))c_{ji}
\end{equation}
and
\begin{equation} \label{lemma_decrease_bar}
x_i(s+1)=x_i^2(s)+\bar{x}_i(s)-\bar{x}_i(s)^2
\end{equation}
Note that by assuming $x(s)>0$, we immediately have $x_i(s)>0$, $\bar{x}_i(s)>0$, and $x_i(s)+\bar{x}_i(s)<1$. Comparing with (\ref{lemma_decrease_equation}), we notice that if we can show that for any $i\in[n]$, $x_i(s)\neq \bar{x}_i(s)$, then we will have the desirable property that the maximum of $x(s)$ is decreasing.

To this end, let us define two sets first. Let $\mathcal{M}$ be the set containing the indices of the maximum elements in $x(s)$, i.e.,
\begin{equation*}
\mathcal{M}=\{i|i\in[n],\:x_i(s)=x(s)_{max}\},
\end{equation*}
and let $\mathcal{Q}$ be the set containing the indices of all the elements in $x(s)$ such that $x_i(s)=\bar{x}_i(s)$, i.e.,
\begin{equation*}
\mathcal{Q}=\{i|i\in[n],\:x_i(s)=\bar{x}_i(s)\}.
\end{equation*}
We will use $|.|$ to denote the cardinality of a set.

Note that for any $i\in\mathcal{M}$, \emph{Case 2} in the proof of Lemma \ref{lemma4} (i.e., $v(i)_{max}\geq 0.5$) cannot happen. This is because $v(i)_{max}\leq x_i(s)$ for any $i\in\mathcal{M}$. From $x(s)>0$ and $x(s)\in\Delta$, we then have $v(i)_{max}<0.5$. Therefore, we only need to consider \emph{Case 1} (i.e., $v(i)_{max}<0.5$). From Fig. 1, we can then conclude that $\bar{x}_i(s)$ is unique because $f(v)=v-v^2$ is a strictly increasing function in the interval (0,0.5). In addition, note that for any $i\in\mathcal{Q}$, we have $x_i(s+1)=x_i(s)$ and any $i\notin\mathcal{Q}$, we have
\begin{equation*}
x(s)_{min}<x_i(s+1)<x(s)_{max}.
\end{equation*}

Now, to prove Lemma \ref{lemma_decrease}, we consider the following scenarios:

\emph{Scenario 1: $|\mathcal{M}|=1$.}

In this scenario, there is only one maximum element in $x(s)$. Without loss of generality, let $x_1(s)=x(s)_{max}$. We claim that $1\notin\mathcal{Q}$. This is because $\bar{x}_1(s)\in[v(1)_{min},v(1)_{max}]$ and $v(1)_{max}<x_1(s)$. Therefore, we have
\begin{equation*}
x(s)_{min}<x_1(s+1)<x(s)_{max}=x_1(s)
\end{equation*}
For any $i\in\{2,3,\dots,n\}$, if $i\in\mathcal{Q}$, then $x_i(s+1)=x_i(s)<x_1(s)$. If $i\notin\mathcal{Q}$, then
\begin{equation*}
x(s)_{min}<x_i(s+1)<x(s)_{max}=x_1(s)
\end{equation*}
Therefore, we conclude that $x(s+1)_{max}<x(s)_{max}$.

\emph{Scenario 2: $|\mathcal{M}|=2$.}

In this scenario, there are two maximum elements in $x(s)$. Again, without loss of generality, let $x_1(s)=x_2(s)=x(s)_{max}$ and we have $x_1(s)<0.5$ and $x_2(s)<0.5$. Now, we can identify three cases.

\emph{Scenario 2.A: $1\notin\mathcal{Q}$ and $2\notin\mathcal{Q}$.}

Following exactly the same analysis in \emph{Scenario 1}, we conclude that $x(s+1)_{max}<x(s)_{max}$.

\emph{Scenario 2.B: $1\in\mathcal{Q}$ and $2\notin\mathcal{Q}$.}

In this case, we have
\begin{equation*}
x(s)_{min}<x_2(s+1)<x(s)_{max}.
\end{equation*}

Let us recall that:
\begin{equation} \label{lemma_decrease_claim}
\bar{x}_i(s)-\bar{x}_i^2(s)=\sum_{j=1,j\neq i}^n(x_j(s)-x_j^2(s))c_{ji}
\end{equation}
We note that $v(1)_{max}=x_2(s)$ and $f(v)=v-v^2$ is a strictly increasing function in interval (0,0.5) (Fig. 1). Therefore, $1\in\mathcal{Q}$ implies that $c_{21}=1$ and $c_{j1}=0$ for any $j\in[n]$ and $j\neq 2$, as $C$ is assumed to be doubly stochastic with zero diagonal elements. From the component-wise Modified DeGroot-Friedkin model (\ref{onestep_component}), we have
\begin{equation}\label{lemma_decrease_sce2b}
x_1(s+1)=x_1^2(s)+x_2(s)-x_2^2(s)
\end{equation}
Therefore, we have $x_1(s+1)=x_1(s)$, $x_2(s+1)<x_1(s)$, and $x_j(s+1)<x_1(s)$ for any $j\in\{2,3,\dots,n\}$. $x_1(s+1)$ is then the only maximum element in $x(s+1)$. This then reduces to \emph{Scenario 1}, and following the same analysis, we have $x(s+2)_{max}<x(s+1)_{max}=x_1(s+1)$. Therefore, we finally conclude that $x(s+2)_{max}<x(s)_{max}$. Note that the analysis for $1\notin\mathcal{Q}$ and $2\in\mathcal{Q}$ is exactly the same.

\emph{Scenario 2.C: $1\in\mathcal{Q}$ and $2\in\mathcal{Q}$.}

Again, recall that
\begin{equation*}
\bar{x}_i(s)-\bar{x}_i^2(s)=\sum_{j=1,j\neq i}^n(x_j(s)-x_j^2(s))c_{ji}
\end{equation*}
Based on the analysis in \emph{Scenario 2.B}, we know that $1\in\mathcal{Q}$ and $2\in\mathcal{Q}$ imply the following:
\begin{equation} \label{lemma_decrease_sec2c}
\begin{split}
\text{$c_{21}=1$ and $c_{j1}=0$, for all $j\in\{3,4,\dots,n\}$}\\
\text{$c_{12}=1$ and $c_{k2}=0$, for all $k\in\{3,4,\dots,n\}$}
 \end{split}
 \end{equation}
Transforming condition (\ref{lemma_decrease_sec2c}) into the underlying directed graph represented by $C$, it means that there are only one directed edge from node 2 to node 1 and one directed edge from node 1 to node 2, and no other nodes have directed edges to node 1 or node 2. Therefore, starting from any nodes other than node 1 or node 2, we cannot find a directed path to reach node 1 or node 2, which violates the condition that the directed graph is strongly connected (i.e., $C$ is an irreducible matrix). We hence conclude that \emph{Scenario 2.C} cannot happen.

\emph{Scenario 3: $|\mathcal{M}|=3$.}
In this scenario, there are three maximum elements in $x(s)$. Again, without loss of generality, let $x_1(s)=x_2(s)=x_3(s)=x(s)_{max}$ and we have $x(s)_{max}<0.5$. Now, we can similarly identify four cases.

\emph{Scenario 3.A: $1\notin\mathcal{Q}$, $2\notin\mathcal{Q}$ and $3\notin\mathcal{Q}$.}

We can prove that $x(s+1)_{max}<x(s)_{max}$ by the same reasoning in \emph{Scenario 1}.

\emph{Scenario 3.B: $1\in\mathcal{Q}$, $2\notin\mathcal{Q}$ and $3\notin\mathcal{Q}$.}

Similar to \emph{Scenario 2.B}, we know that
\begin{equation*}
\begin{split}
x(s)_{min}<x_2(s+1)<x(s)_{max}\\
x(s)_{min}<x_3(s+1)<x(s)_{max}
\end{split}
\end{equation*}
Like in \emph{Scenario 2.B}, after one update, this scenario reduces to \emph{Scenario 1} and hence we have $x(s+2)_{max}<x(s)_{max}$. Note that the analysis for either $2\in\mathcal{Q}$ or $3\in\mathcal{Q}$ is the same.

\emph{Scenario 3.C: $1\in\mathcal{Q}$, $2\in\mathcal{Q}$ and $3\notin\mathcal{Q}$.}

Not that the analysis for either $1\notin\mathcal{Q}$ or $2\notin\mathcal{Q}$ is the same as the case when $3\notin\mathcal{Q}$.

From (\ref{lemma_decrease_claim}), and based on a similar analysis in \emph{Scenario 2.B}, we notice that $1\in\mathcal{Q}$ and $2\in\mathcal{Q}$ implies the following conditions:
\begin{equation} \label{lemma_decrease_sec3c}
\begin{split}
\text{$c_{21}\geq 0$, $c_{31}\geq 0$, and $c_{j1}=0$, for all $j\in\{4,5,\dots,n\}$}\\
\text{$c_{12}\geq 0$, $c_{32}\geq 0$, and $c_{k2}=0$, for all $k\in\{4,5,\dots,n\}$}
 \end{split}
 \end{equation}
$C$ is doubly stochastic also implies that $c_{21}+c_{31}=1$ and $c_{12}+c_{32}=1$. Condition (\ref{lemma_decrease_sec3c}) can be further divided into the following three different cases:

\emph{Condition 1: $c_{31}=0$ and $c_{32}=0$}.

Under this situation, condition (\ref{lemma_decrease_sec3c}) reduces to condition (\ref{lemma_decrease_sec2c}), which violates the fact that $C$ is an irreducible matrix. Hence, this situation does not happen.

\emph{Condition 2: either $c_{31}=0$ or $c_{32}=0$, but not both}.

Let us consider $c_{31}=0$, and the analysis for the case when $c_{32}=0$ is exactly the same. Then, we have $c_{21}=1$, $c_{12}>0$ and $c_{32}>0$. From the component-wise Modified DeGroot-Friedkin model (\ref{onestep_component}), we know that
\begin{equation}\label{lemma_decrease_3c_update}
\begin{split}
x_1(s+1)&=x_1^2(s)+x_2(s)-x_2^2(s)\\
x_2(s+1)&=x_2^2(s)+(x_1(s)-x_1^2(s))c_{12}+(x_3(s)-x_3^2(s))c_{32}
\end{split}
\end{equation}
Note that from (\ref{lemma_decrease_3c_update}) and the fact that $1\in\mathcal{Q}$, $2\in\mathcal{Q}$ and $3\notin\mathcal{Q}$, we have $x_1(s+1)=x_2(s+1)=x(s+1)_{max}=x(s)_{max}$ and $x_3(s+1)<x(s)_{max}$. Since $x_3(s+1)$ has reduced, (\ref{lemma_decrease_3c_update}) suggests that after another update, we must have $x_1(s+2)=x(s+1)_{max}=x(s)_{max}$ and $x_2(s+2)<x(s+1)_{max}$. Now, $x_2(s+2)$ has reduced, and it is straightforward to see from (\ref{lemma_decrease_3c_update}) that $x_1(s+3)<x(s+2)_{max}$.

In conclusion, under \emph{Condition 2}, we have $x(s+3)_{max}<x(s)_{max}$.

\emph{Condition 3: $c_{31}>0$ and $c_{32}>0$}.

Similar to the analysis in \emph{Condition 2}, from (\ref{onestep_component}), we have
\begin{equation}\label{lemma_decrease_3c_update_2}
\begin{split}
x_1(s+1)&=x_1^2(s)+(x_2(s)-x_2^2(s))c_{21}+(x_3(s)-x_3^2(s))c_{31}\\
x_2(s+1)&=x_2^2(s)+(x_1(s)-x_1^2(s))c_{12}+(x_3(s)-x_3^2(s))c_{32}
\end{split}
\end{equation}
Then, $x_1(s+1)=x_2(s+1)=x(s+1)_{max}=x(s)_{max}$ and $x_3(s+1)<x(s)_{max}$. Since $x_3(s+1)$ has reduced, (\ref{lemma_decrease_3c_update_2}) suggests that after another update, we must have $x_1(s+2)<x(s+1)_{max}$ and $x_2(s+2)<x(s+1)_{max}$. Hence, we conclude that $x(s+2)_{max}<x(s)_{max}$.

\emph{Scenario 3.D: $1\in\mathcal{Q}$, $2\in\mathcal{Q}$ and $3\in\mathcal{Q}$.}

Under this scenario, we know that the following condition must be satisfied:
\begin{equation} \label{lemma_decrease_sec3c_cond}
\begin{split}
\text{$c_{21}\geq 0$, $c_{31}\geq 0$, and $c_{j1}=0$, for all $j\in\{4,5,\dots,n\}$}\\
\text{$c_{12}\geq 0$, $c_{32}\geq 0$, and $c_{k2}=0$, for all $k\in\{4,5,\dots,n\}$}\\
\text{$c_{13}\geq 0$, $c_{23}\geq 0$, and $c_{l2}=0$, for all $l\in\{4,5,\dots,n\}$}
 \end{split}
 \end{equation}

Again, transforming condition (\ref{lemma_decrease_sec3c_cond}) into the underlying directed graph represented by $C$, it means that we can divide the nodes into two components with component $\#1$ containing nodes $\{1,2,3\}$ and component $\#2$ containing nodes $\{4,5,\dots,n\}$, and further there are no directed edges from component $\#2$ to component $\#1$. This violates the irreducibility assumption of matrix $C$, and hence this case does not happen.

\emph{Scenario 4: $4\leq|\mathcal{M}|\leq n-1$.}

The analysis in \emph{Scenario 3} can be readily applied in this scenario with minor modifications. The difference is that we will have more tedious cases to discuss. The worst case happens in a similar situation as \emph{Scenario 3.C (Condition 2)}, and we can conclude that $x(s+m)_{max}<x(s)_{max}$ if $|\mathcal{M}|=m$.

\emph{Scenario 5: $|\mathcal{M}|=n$.}

This case happens only when $x(s)=\frac{1}{n}\mathbf{1}$, and we know that $\frac{1}{n}\mathbf{1}$ is an equilibrium point.
\end{proof}

Having proved Lemma \ref{lemma_decrease}, we are now in a position to prove Theorem \ref{the4}.

\begin{proof}
\emph{(Theorem \ref{the4})}
Consider the Lyapunov function
$$V(x(s))=x(s)_{max}-\frac{1}{n}.$$
$V(x(s))\geq 0$ for all s. By Lemma \ref{lemma4}, it follows that $V(x(s))$ is a non-increasing function, i.e., $\Delta V(x(s))\le 0$ for all $s$. Furthermore, Lemma \ref{lemma 3} suggests that after finite steps $\tau>0$, $x(\tau)>0$, and we have shown in Lemma \ref{lemma_decrease} that if $x(s)>0$ and $x(s)\neq\frac{1}{n}\mathbf{1}$, then $x(s)_{max}$ must decrease in finite steps. Therefore, $V(x(s))$ must decrease in finite updates, and $\Delta V(x(s))$ cannot be 0 for infinite steps unless $x(s)=\frac{1}{n}\mathbf{1}$, which means that $x(s)$ must converge to the equilibrium point $x^*=\frac{1}{n}\mathbf{1}$. Theorem \ref{the4} is hence proved.
\end{proof}

From Theorem \ref{the4}, self-confidence vector $x(s)$ in the Modified DeGroot-Friedkin model converges to the democratic state $\frac{1}{n}\mathbf{1}$ as in the original DeGroot-Friedkin model for the case of doubly stochastic $C$.
Based on the results, we expect that in general cases, we should get the same stability results.
Simulation results show that for general stochastic $C$, self-confidence vector in the Modified DeGroot-Friedkin model converges to the same nontrivial equilibrium $x^*$ as suggested in Theorem \ref{the1} for the original model. We conjecture that for the Modified DeGroot-Friedkin model, if $C$ is a stochastic matrix that is irreducible with diagonal entries being zero, then there is only one nontrivial equilibrium, which lies in the interior of $\Delta$ and for any $x(0)\in\Delta\setminus\{e_1,\dots,e_n\}$, $x(s)$ converges to this equilibrium point. We leave verification of this conjecture for future work.

\section{Numerical Simulations}\label{simulations}

We first provide some numerical simulations for the cases when the relative interaction matrices $C$ are doubly stochastic to demonstrate the convergence result proved in the last section. Particularly, we consider two networks with five individuals: a directed complete graph and a directed ring graph. The weights for the two graphs are assigned according to the following matrices, respectively.
\begin{eqnarray*}
 C_{\text{complete}}  &=&
  \left[ {\begin{array}{ccccc}
   0 & 0.1 & 0.3 & 0.4 & 0.2\\
   0.6 & 0 & 0.1 & 0.15 & 0.15 \\
    0.2 & 0.3 & 0 & 0.3 & 0.2 \\
     0.1 & 0.35 & 0.1 & 0 & 0.45 \\
     0.1 & 0.25 & 0.5 & 0.15 & 0\\
  \end{array} } \right],\quad \\
  C_{\text{ring}} &=&
  \left[ {\begin{array}{ccccc}
   0 & 1 & 0 & 0 & 0\\
   0 & 0 & 1 & 0 & 0 \\
    0 & 0 & 0 & 1 & 0 \\
     0 & 0 & 0 & 0 & 1 \\
     1 & 0 & 0 & 0 & 0\\
  \end{array} } \right]
\end{eqnarray*}
\begin{figure}
\centering
  \includegraphics[width=0.9\linewidth]{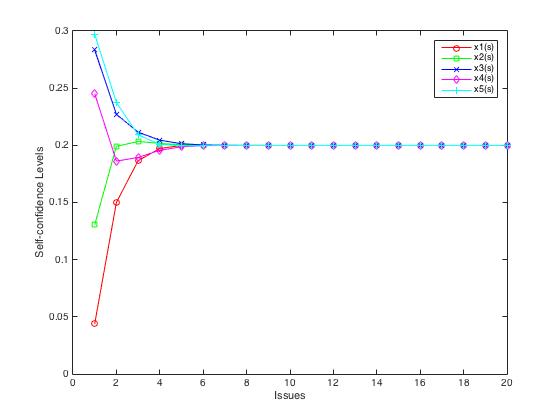}
  \caption{Complete Graph: \\ $x(0)=[0.0439, 0.1305, 0.2834, 0.2452, 0.2970]'$}
  \label{fig:sub1}
  \end{figure}
  \begin{figure}
   \centering

  \includegraphics[width=0.9\linewidth]{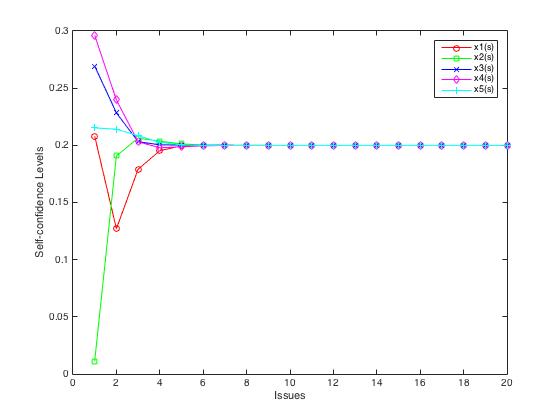}
  \caption{Complete Graph: \\ $x(0)=[0.2080, 0.0113, 0.2693, 0.2962, 0.2152]'$}
  \label{fig:sub2}
\end{figure}
\begin{figure}
\centering
  \includegraphics[width=0.9\linewidth]{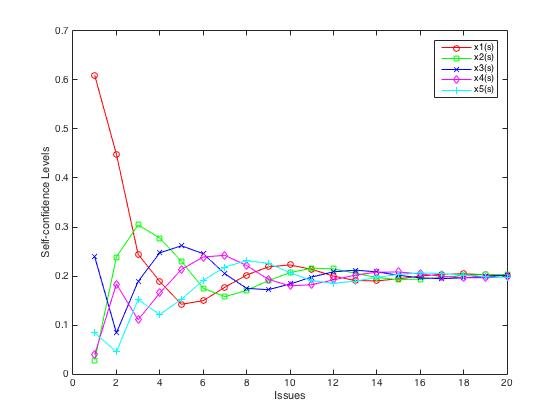}
  \caption{Ring Graph: \\ $x(0)=[0.6097, 0.0275, 0.2391, 0.0399, 0.0838]'$}
  \label{fig:sub3}
  \end{figure}
  \begin{figure}
   \centering

  \includegraphics[width=0.9\linewidth]{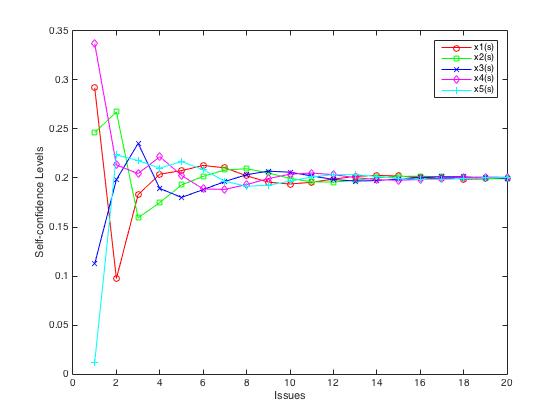}
  \caption{Ring Graph: \\ $x(0)=[0.2920, 0.2464, 0.1124, 0.3370, 0.0122]'$}
  \label{fig:sub4}
\end{figure}

Figure \ref{fig:sub1} and Figure \ref{fig:sub2} are the simulation results for complete graph with different initial conditions, while Figure \ref{fig:sub3} and Figure \ref{fig:sub4} are for the cases of ring graph. As expected, the maximum and the minimum of the self-confidence levels are not expanding, and the self-confidence level for any individual in all cases converges to $0.2$. The convergence in a complete graph only takes five or six issues and is significantly faster than the cases in a ring graph. Complete graph has more edges than a ring graph, so each individual is able to communicate with more other individuals. Instead, in the ring graph, each individual only has one neighbor to interact with. This may be the reason why complete graph has a faster convergence speed.

To further get some insights about our conjecture on the convergence of the cases when the relative interaction matrices $C$ are stochastic but not doubly stochastic, we also provide some simulations for a directed complete graph but with different weights in Figure 5.5 to Figure 5.8. The weights are assigned as follows:
\begin{eqnarray*}
\begin{split}
 C_{\text{complete}}^1 &=&
  \left[ {\begin{array}{ccccc}
   0 & 0.2 & 0.3 & 0.4 & 0.1\\
   0.6 & 0 & 0.1 & 0.15 & 0.15 \\
    0.3 & 0.3 & 0 & 0.3 & 0.1 \\
     0.4 & 0.15 & 0.1 & 0 & 0.35 \\
     0.1 & 0.25 & 0.2 & 0.45 & 0\\
  \end{array} } \right],\\
  C_{\text{complete}}^2 &=&
  \left[ {\begin{array}{ccccc}
   0 & 0.9 & 0.02 & 0.03 & 0.05\\
   0.5 & 0 & 0.3 & 0.1 & 0.1 \\
    0.25 & 0.25 & 0 & 0.2 & 0.3 \\
     0.7 & 0.1 & 0.05 & 0 & 0.15 \\
     0.35 & 0.25 & 0.25 & 0.15 & 0\\
  \end{array} } \right]
  \end{split}
\end{eqnarray*}
\begin{figure}
\centering
  \includegraphics[width=0.9\linewidth]{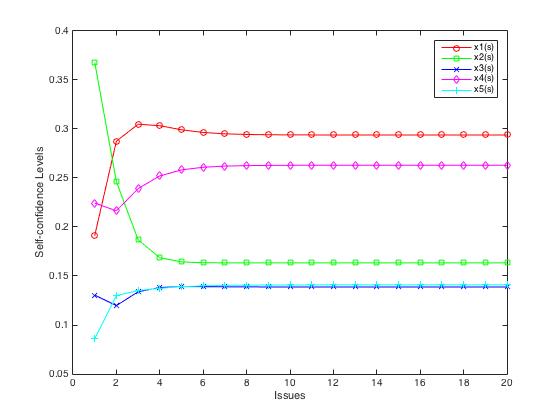}
  \caption{$C^1_{\text{complete}}$: \\ $x(0)=[0.1911, 0.3681, 0.1305, 0.2245, 0.0858]'$}
  \label{fig:sub5}
  \end{figure}
  \begin{figure}
   \centering

  \includegraphics[width=0.9\linewidth]{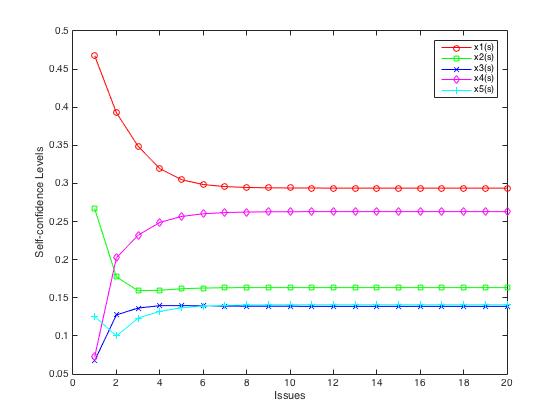}
  \caption{$C^1_{\text{complete}}$: \\ $x(0)=[0.4675, 0.2667, 0.0676, 0.0727, 0.1255]'$}
  \label{fig:sub6}
\end{figure}
\begin{figure}
\centering
  \includegraphics[width=0.9\linewidth]{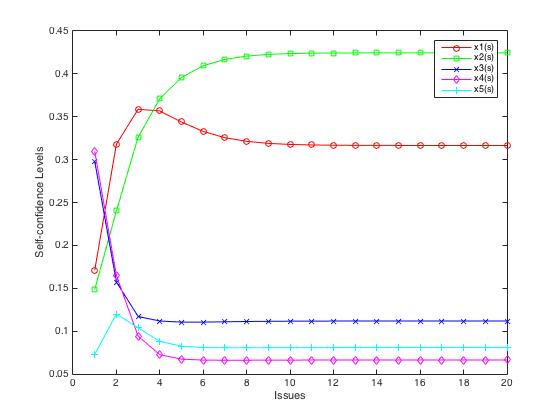}
  \caption{$C^2_{\text{complete}}$: \\ $x(0)=[0.1709, 0.1486, 0.2981, 0.3097, 0.0728]'$}
  \label{fig:sub7}
  \end{figure}
  \begin{figure}
   \centering

  \includegraphics[width=0.9\linewidth]{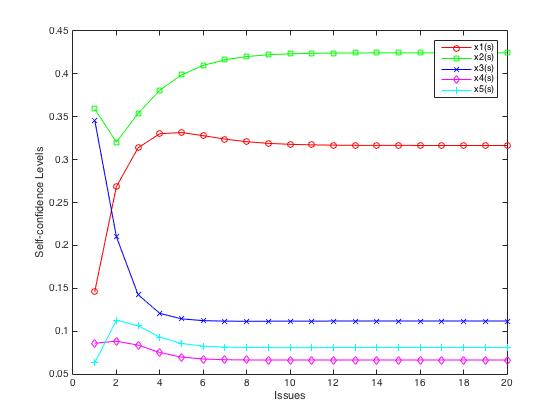}
  \caption{$C^2_{\text{complete}}$: \\ $x(0)=[0.1459, 0.3592, 0.3462, 0.0859, 0.0628]'$}
  \label{fig:sub8}
\end{figure}

As we have conjectured, given a stochastic and irreducible relative interaction matrix $C$, the self-confidence levels converge, but the convergent values depend on the specific weights in $C$, i.e., for different $C$, the convergent self-confidence vectors are different. Furthermore, note that we no longer have the nice property that the maximum and the minimum of the self-confidence levels are not expanding. In fact, as simulations suggest, it is quite possible that the maximum is increasing or the minimum is decreasing. Therefore, the proof techniques used in this paper cannot apply in such situation, and we leave the proof of our general conjecture for future work.

\section{Conclusions} \label{sec_conclusion}
In this paper, we have introduced a Modified DeGroot-Friedkin model, which allows individuals to update their self-confidence levels after each discussion on a particular issue. We have then investigated the limiting behaviors of the self-confidence vector when a sequence of issues are discussed. A complete analysis for the case when the underlying interaction matrix is doubly stochastic has been provided. As expected, the self-confidence vector converges to the equal-weights vector, meaning that eventually each individual will have the same level of self-confidence.

This paper serves as a starting point for this line of research and many questions still remain to be answered. As we have seen, the stability of the modified model and the original DeGroot-Friedkin model coincides for the case of doubly stochastic interaction matrix, which suggests that there might be similar connections for more general settings. A future direction that is of particular interest is to mathematically characterize the properties of the Modified DeGroot-Friedkin model for general stochastic interaction matrices under the condition that $x(0)\in \Delta$ or $x(0)$ is not necesarrily in $\Delta$. In addition, future work will focus on more general finite-steps cases (\ref{finite_step}), i.e., $x_i(s+1)=p_i(s,T)$ for finite $T>1$. We expect that there would be some similar behaviors when we go from one-step to finite-steps.
\addtolength{\textheight}{-12cm}   

\section*{Acknowledgment}

The authors are grateful to Mohamed Ali Belabbas, Ali Khanafer,
and Meiyue Shao (Lawrence Berkeley National Laboratory) for insightful discussions,
and to Xudong Chen for suggesting a fix to an error in an earlier proof of Theorem \ref{the4}.




\bibliographystyle{unsrt}
\bibliography{ACC2015}




\end{document}